\newtheorem{thm}{Theorem}[section]
\newtheorem{lem}[thm]{Lemma}
\newtheorem{prop}[thm]{Proposition}
\theoremstyle{definition}
\numberwithin{equation}{section}
\begin{document}

\title[$\ell^1$-Cospectrality of graphs]{$\ell^1$-Cospectrality of graphs}%
\author[A. Abdollahi]{Alireza Abdollahi}%
\address{Department of Mathematics, University of Isfahan, Isfahan 81746-73441, Iran; and School of Mathematics, Institute for Research in Fundamental Sciences (IPM), P.O. Box 19395-5746, Tehran, Iran}%
\email{a.abdollahi@math.ui.ac.ir}%
\author[N. Zakeri]{Niloufar Zakeri}
\address{Department of Mathematics, University of Isfahan, Isfahan 81746-73441, Iran}%
\email{zakeri@sci.ui.ac.ir}%
\thanks{}%
\subjclass[2010]{05C50; 05C31}%
\keywords{Spectra of graphs; Cospectrality of graphs; Adjacency matrix of a graph; $\ell^1$-norm}%

\begin{abstract}
The following problem  has been proposed in  [Research problems from the Aveiro workshop on graph spectra, {\em Linear Algebra and its Applications}, {\bf 423} (2007)
172-181.]:\\
(Problem AWGS.4) Let $G_n$ and $G'_n$ be two nonisomorphic graphs on $n$ vertices with spectra
$$\lambda_1 \geq \lambda_2 \geq \cdots \geq \lambda_n \;\;\;\text{and}\;\;\; \lambda'_1 \geq \lambda'_2 \geq \cdots \geq \lambda'_n,$$
respectively. Define the distance between the spectra of $G_n$ and $G'_n$ as
$$\lambda(G_n,G'_n) =\sum_{i=1}^n (\lambda_i-\lambda'_i)^2 \;\;\; \big(\text{or use}\; \sum_{i=1}^n|\lambda_i-\lambda'_i|\big).$$
Define the cospectrality of $G_n$ by
$$\text{cs}(G_n) = \min\{\lambda(G_n,G'_n) \;:\; G'_n \;\;\text{not isomorphic to} \; G_n\}.$$
{\bf Problem A.} Investigate $\text{cs}(G_n)$ for special classes of graphs.\\

In this paper we study Problem A for certain graphs with respect to the $\ell^1$-norm, i.e. $\sigma(G_n,G'_n)=\sum_{i=1}^n|\lambda_i-\lambda'_i|$. We find $\text{cs}(K_n)$, $\text{cs}(nK_1)$, $\text{cs}(K_2+(n-2)K_1)$ ($n\geq 2$), $\text{cs}(K_{n,n})$ and $\text{cs}(K_{n,n+1})$, where $K_n, nK_1, K_2+(n-2)K_1, K_{n,m} $ denote the complete graph on $n$ vertices,  the null graph on $n$ vertices, the disjoint union of the $K_2$ with $n-2$ isolated vertices  ($n\geq 2$), and the complete bipartite graph with parts of sizes $n$ and $m$, respectively.
\end{abstract}
\maketitle
\section{\bf Introduction and Results}
Throughout the paper all graphs are simple, that is finite and undirected without loops and multiple edges.\\
Richard Brualdi proposed in  \cite{s} the following problem:\\
(Problem AWGS.4) Let $G_n$ and $G'_n$ be two nonisomorphic graphs on $n$ vertices with spectra
$$\lambda_1 \geq \lambda_2 \geq \cdots \geq \lambda_n \;\;\;\text{and}\;\;\; \lambda'_1 \geq \lambda'_2 \geq \cdots \geq \lambda'_n,$$
respectively. Define the distance between the spectra of $G_n$ and $G'_n$ as
$$\lambda(G_n,G'_n) =\sum_{i=1}^n (\lambda_i-\lambda'_i)^2 \;\;\; \big(\text{or use}\; \sum_{i=1}^n|\lambda_i-\lambda'_i|\big).$$
Define the cospectrality of $G_n$ by
$$\text{cs}(G_n) = \min\{\lambda(G_n,G'_n) \;:\; G'_n \;\;\text{not isomorphic to} \; G_n\}.$$
Let $$\text{cs}_n = \max\{\text{cs}(G_n) \;:\; G_n \;\;\text{a graph on}\; n \;\text{vertices}\}.$$
This function measures how far apart the spectrum of a graph with $n$ vertices can be from the
spectrum of any other graph with $n$ vertices.\\

\noindent {\bf Problem A.} Investigate $\text{cs}(G_n)$ for special classes of graphs.\\
{\bf Problem B.} Find a good upper bound on $\text{cs}_n$.\\

See, for example \cite{clss,wz,zp} some applications of spectral distances of graphs. 
In \cite{ajo}, Problem B has completely been answered. 
It is of course possible to study Problems A or B for other matrix representations of graphs, such as Laplacian, normalized Laplacian, signless Laplacian and distance matrices, see e.g., \cite{ahk,lld,ds,ghl,ha,j,jzs}. In the current paper, we only study Problem A with respect to the adjacency matrix of graphs.

In \cite{ao,o}, Problem A is studied and  cospectralities of  classes of complete graphs and complete bipartite  graphs with respect to Euclidean norm (the $\ell^2$-norm) are computed.

In \cite{js}, spectral distance between certain graphs is studied with respect to the $\ell^1$-norm i.e. $\sigma(G_n,G'_n)=\sum_{i=1}^n|\lambda_i-\lambda'_i|$. 

In this paper we study Problem A for some graphs with respect to the $\ell^1$-norm. \\

Let us first introduce some notations.\\

For a graph $G$, $V(G)$ and $E(G)$ denote the vertex set and edge set of $G$, respectively; By the  order and size of $G$ we mean the number of vertices and the number of edges of $G$, respectively; Denote by $\overline{G}$ the complement of $G$. Let $G$ be a graph with vertex set $\{v_1,\dots,v_n\}$. The adjacency matrix of $G$ is an $n\times n$ matrix $A(G)=[a_{ij}]$ such that $a_{ij}=1$ if $v_i$ and $v_j$ are adjacent, and $a_{ij}=0$ otherwise. By the eigenvalues of $G$, we mean those of its adjacency matrix. We denote by $Spec(G)$ the multiset of the eigenvalues of the graph $G$. For two graphs $G$ and $H$ with disjoint vertex sets, $G+H$ denotes the graph with the vertex set $V(G) \cup V(H)$ and the edge set $E(G) \cup E(H)$, i.e. the disjoint union of two graphs $G$ and $H$. The complete product (join) $G \nabla H$ of graphs $G$ and $H$ is the graph obtained from $G+H$ by joining every vertex of $G$ with every vertex of $H$. In particular, $nG$ denotes $\underbrace{G+\cdots+G}_n$ and $\nabla_nG$ denotes $\underbrace{G\nabla G \nabla \dots \nabla G}_n$.

For positive integers $n_1, \dots,n_\ell$, $K_{n_1,\dots,n_\ell}$ denotes the complete multipartite graph with $\ell$ parts of sizes $n_1,\dots,n_\ell$.
Let $K_n$ denote the complete graph on $n$ vertices, $nK_1=\overline{K_n}$ denote the null graph on $n$ vertices and $P_n$ denote the path with $n$ vertices.\\

   
Here we find the cospectralities with respect to the $\ell^1$-norm of those graphs  which have been already found their ones with respect to the $\ell^2$-norm in \cite{ao}. In the following, we give the table of cospectralities of these graphs with respect to the $\ell^1$-norm and $\ell^2$-norm. In the third column (fifth column, resp.)  all graphs $H$ with $\text{cs}(G)=\sigma(G,H)$ ($\text{cs}(G)=\lambda(G,H)$, resp.) are given for the graph $G$ in the first column.  \\

\begin{small}
\begin{tabular}{|c|c|c|c|c|}\hline
Graph & $\ell^1$-norm & $H$ & $\ell^2$-norm & $H$\\ \hline
$nK_1$& 2& $K_2+(n-2)K_1$& 2 & $K_2+(n-2)K_1$\\
$K_2+(n-2)K_1$& $2(\sqrt{2}-1)$&$P_3+(n-3)K_1$&$2(\sqrt{2}-1)^2$&$P_3+(n-3)K_1$\\
$K_n$&2&$K_n\setminus e$ or $K_{n-1}+K_1$ &$n^2+n-n\sqrt{n^2+2n-7}-2$& $K_n\setminus e$\\
$K_{n,n}$&$ 2(n-\sqrt{n^2-1})$&$K_{n-1,n+1}$&$2(n-\sqrt{n^2-1})^2$&$K_{n-1,n+1}$\\
$K_{n,n+1}$&$2(\sqrt{n^2+n}-\sqrt{n^2+n-2})$&$K_{n-1,n+2}$&$2(\sqrt{n^2+n}-\sqrt{n^2+n-2})^2$&$K_{n-1,n+2}$\\\hline
\end{tabular}\\
\end{small}

 Our main results are as follows.

\begin{thm}\label{empty}
 For every integer $n\geq2$, $\text{cs}(nK_1)=2$. Moreover, $\text{cs}(nK_1)=\sigma(nK_1,H)$ for some graph $H$ if and only if $H\cong K_2+(n-2)K_1$.
\end{thm}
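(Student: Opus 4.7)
The plan is to recast the distance as the graph energy: since the spectrum of $nK_1$ is identically zero, for any graph $H$ on $n$ vertices with eigenvalues $\mu_1 \geq \mu_2 \geq \cdots \geq \mu_n$ one has
\[
\sigma(nK_1, H) \;=\; \sum_{i=1}^n |\mu_i|,
\]
which is the classical graph energy $E(H)$. Thus the theorem amounts to showing that every graph $H$ on $n$ vertices with at least one edge satisfies $E(H) \geq 2$, with equality exactly when $H \cong K_2 + (n-2)K_1$. For the achievability half, the graph $K_2+(n-2)K_1$ has spectrum $\{1,0,\dots,0,-1\}$ and hence energy $2$, which already yields $\text{cs}(nK_1)\leq 2$.

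For the lower bound I would invoke Cauchy interlacing. If $H\not\cong nK_1$, pick an edge $uv$; the principal $2\times 2$ submatrix of $A(H)$ indexed by $\{u,v\}$ is $A(K_2)$, whose eigenvalues are $\pm 1$. Interlacing then forces $\mu_1(H) \geq 1$ and $\mu_n(H) \leq -1$, so
\[
E(H) \;\geq\; |\mu_1(H)| + |\mu_n(H)| \;\geq\; 2,
\]
completing the inequality $\text{cs}(nK_1)\geq 2$.

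For the uniqueness clause, suppose $E(H)=2$ with $H\not\cong nK_1$. Equality must propagate through the previous chain, forcing $\mu_1=1$, $\mu_n=-1$ and $\mu_2=\cdots=\mu_{n-1}=0$; in particular the spectral radius of $H$ equals $1$. The main step of the argument is then the structural characterization: any graph of spectral radius at most $1$ is a disjoint union of copies of $K_1$ and $K_2$. This is a short interlacing argument, since any vertex of degree $\geq 2$ would give an induced $P_3$ or $K_3$, of spectral radius $\sqrt{2}$ or $2$, contradicting $\rho(H)\leq 1$. Once this is in hand, the multiplicity of the eigenvalue $1$ equals the number of $K_2$-components, which is $1$ here, so $H$ consists of exactly one $K_2$ together with $n-2$ isolated vertices, i.e.\ $H\cong K_2+(n-2)K_1$. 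I do not foresee a serious obstacle: the whole argument is really a repackaging of two classical facts, the energy lower bound $E(H)\geq 2$ for nonempty $H$ and the $\rho\leq 1$ characterization, joined by the equality analysis in the interlacing bound.
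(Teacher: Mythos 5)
Your proof is correct, but it takes a different route from the paper's. Both arguments begin with the same observation that $\sigma(nK_1,H)=E(H)$, the energy of $H$. The paper then finishes in one line by citing the bound $E(G)\geq 2\sqrt{m}$ (with $m$ the number of edges), together with its equality characterization --- equality holds iff $G$ is a complete bipartite graph plus isolated vertices --- so $H\not\cong nK_1$ forces $m\geq 1$, hence $E(H)\geq 2$, and equality pins down $m=1$, i.e. $H\cong K_2+(n-2)K_1$. You instead give a self-contained argument: interlacing on an induced $K_2$ yields $\mu_1\geq 1$ and $\mu_n\leq -1$, hence $E(H)\geq 2$; the equality analysis forces the spectrum $\{1,0,\dots,0,-1\}$, and the characterization of graphs with spectral radius at most $1$ as disjoint unions of $K_1$'s and $K_2$'s (via the induced $P_3$/$K_3$ observation) together with the multiplicity count of the eigenvalue $1$ gives uniqueness. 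Your version is more elementary and avoids importing the equality case of the energy bound, at the cost of being longer; the paper's is shorter but leans entirely on the cited theorem. Both are complete and correct.
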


\begin{thm}\label{oneedge}
$\text{cs}(K_2)=\sigma(K_2,2K_1)=2$ and for every integer $n\geq3$, $\text{cs}(K_2+(n-2)K_1)=2(\sqrt{2}-1)$. Moreover,  $\text{cs}(K_2+(n-2)K_1)=\sigma(K_2+(n-2)K_1,H)$ for some graph $H$ if and only if $H\cong P_3+(n-3)K_1$.
\end{thm}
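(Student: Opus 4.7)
The plan is to first establish the upper bound by exhibiting $P_3+(n-3)K_1$ as a witness, and then prove the matching lower bound by case analysis on the number of edges of the competitor graph. Using that the spectrum of $K_2+(n-2)K_1$ is $\{1,0,\dots,0,-1\}$ and that of $P_3+(n-3)K_1$ is $\{\sqrt{2},0,\dots,0,-\sqrt{2}\}$, a direct termwise comparison gives $\sigma(K_2+(n-2)K_1,P_3+(n-3)K_1)=2(\sqrt{2}-1)$ for $n\geq 3$, so $\text{cs}(K_2+(n-2)K_1)\leq 2(\sqrt{2}-1)$. The case $n=2$ is immediate since $2K_1$ is the only other graph on two vertices.

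For the matching lower bound when $n\geq 3$, fix a graph $H$ on $n$ vertices with $H\not\cong K_2+(n-2)K_1$, and write its spectrum as $\lambda_1\geq\dots\geq\lambda_n$. The key observation is Cauchy eigenvalue interlacing: if $H$ contains any edge, then the subgraph induced on its endpoints is $K_2$, which forces $\lambda_1\geq 1$ and $\lambda_n\leq-1$. Setting $A=\lambda_1$, $B=-\lambda_n$ and $S=\sum_{i=2}^{n-1}|\lambda_i|$, the absolute values in $\sigma$ collapse and
$$\sigma(K_2+(n-2)K_1,H)=(A-1)+S+(B-1)=A+B+S-2,$$
so the task reduces to showing $A+B+S\geq 2\sqrt{2}$ with equality precisely when $H\cong P_3+(n-3)K_1$.

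I would then split on $e=e(H)$. The case $e=0$ gives $\sigma=2$; $e=1$ forces $H\cong K_2+(n-2)K_1$ and is excluded; for $e=2$ the only graphs up to isomorphism are $P_3+(n-3)K_1$ (the equality case) and, when $n\geq 4$, $2K_2+(n-4)K_1$, whose spectrum $\{1,1,0,\dots,0,-1,-1\}$ yields $\sigma=2$ by direct computation.

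The main obstacle is the case $e\geq 3$, where $A+B+S>2\sqrt{2}$ must be extracted from spectral constraints alone. The ingredients I would combine are $(A+B)^2\geq A^2+B^2+2$ (using $A,B\geq 1$, so $AB\geq 1$), the trace identity $\sum\lambda_i^2=2e\geq 6$ giving $A^2+B^2\geq 6-T$ where $T=\sum_{i=2}^{n-1}\lambda_i^2$, and the elementary bound $T\leq S^2$ obtained by expanding $\bigl(\sum|\lambda_i|\bigr)^2$. Combined, these give $A+B\geq\sqrt{8-S^2}$ whenever $S\leq 2\sqrt{2}$. A one-variable analysis of $h(S)=\sqrt{8-S^2}+S$ on $[0,2\sqrt{2}]$ shows $h(S)\geq 2\sqrt{2}$ with equality only at the two endpoints, while the range $S>2\sqrt{2}$ is disposed of by the trivial bound $A+B\geq 2$. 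It remains to rule out the endpoint equality cases: $S=2\sqrt{2}$ is impossible since $A+B\geq 2>0$, and $S=0$ forces all middle eigenvalues to vanish, so $A=B=\sqrt{e}\geq\sqrt{3}$, giving $\sigma\geq 2\sqrt{3}-2>2(\sqrt{2}-1)$. This produces the required strict inequality for $e\geq 3$ and completes the proof.
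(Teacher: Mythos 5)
Your argument is correct, and its endgame matches the paper's---reduce everything to the edge count $e$ of $H$, dispose of $e\in\{0,1\}$, check the two graphs with $e=2$ by hand, and show that $e\ge 3$ forces a strictly larger distance---but the engine driving the lower bound is genuinely different. The paper writes
$\sigma(K_2+(n-2)K_1,H)=E(H)-|\lambda_1|-|\lambda_n|+|\lambda_1-1|+|\lambda_n+1|$,
applies the reverse triangle inequality, and then invokes the energy bound $E(H)\ge 2\sqrt{m}$ of Caporossi--Cvetkovi\'c--Gutman--Hansen (Theorem \ref{eng}) to get the uniform estimate $\sigma\ge 2(\sqrt{m}-1)$ in two lines, valid for every edge count $m$ at once. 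You instead make the argument self-contained: interlacing against a $K_2$ gives $\lambda_1\ge 1$ and $\lambda_n\le -1$, so the outer absolute values open with the correct signs (the paper gets by with the cruder bound $|\lambda_1-1|\ge|\lambda_1|-1$), and then the trace identity $\sum_i\lambda_i^2=2e$, the estimate $T\le S^2$, and the concavity of $h(S)=\sqrt{8-S^2}+S$ on $[0,2\sqrt{2}]$ replace the cited energy theorem in the only range where it is actually needed, namely $e\ge 3$. What the paper's route buys is brevity and the clean bound $2(\sqrt{m}-1)$; what yours buys is independence from an external extremal result, at the cost of a longer elementary computation. Your endpoint analysis at $S=0$ (forcing $A=B=\sqrt{e}\ge\sqrt{3}$) and at $S=2\sqrt{2}$ (where $A+B\ge 2$ kills equality) correctly delivers the strictness needed to isolate $P_3+(n-3)K_1$ as the unique minimizer, so the proof is complete.
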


\begin{thm}\label{kn}
Let $n\geq 2$ be an integer. Then $\text{cs}(K_n)=2$. Moreover, $\text{cs}(K_n)=\sigma(K_n,H)$ for some graph $H$ if and only if $H\cong K_{n-1}+K_1$ or $H\cong K_n\setminus e$ for any edge $e$, where $K_n\setminus e$ is the graph obtaining from $K_n$ by deletion one edge $e$.
\end{thm}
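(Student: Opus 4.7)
The plan is to verify that the two candidate extremizers $K_{n-1}+K_1$ and $K_n\setminus e$ achieve distance exactly $2$ by direct spectral computation, and then to prove the matching lower bound $\sigma(K_n,H)\ge 2$ for every $H\not\cong K_n$, together with the characterization of equality. The direct computations are routine: $K_{n-1}+K_1$ has spectrum $\{n-2,\,0,\,-1,\dots,-1\}$, and $K_n\setminus e$ has spectrum $\bigl\{\tfrac{n-3+\sqrt{n^2+2n-7}}{2},\,0,\,-1,\dots,-1,\,\tfrac{n-3-\sqrt{n^2+2n-7}}{2}\bigr\}$, obtained by decomposing the adjacency matrix along the two non-adjacent vertices and the surviving clique; in both cases the sum telescopes cleanly to $2$ since the two extreme eigenvalues of $K_n\setminus e$ add to $n-3$.

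For the lower bound, I would first use $\mu_1\le n-1$ and $\sum_i\mu_i=0$ to rewrite
\[
\sigma(K_n,H)=(n-1-\mu_1)+\sum_{i=2}^n|\mu_i+1|=2\sum_{i=2}^n(\mu_i+1)_+,
\]
where $(x)_+=\max(x,0)$. It is then enough to prove $P(H):=\sum_{i=2}^n(\mu_i+1)_+\ge 1$, with equality iff $H\cong K_{n-1}+K_1$ or $H\cong K_n\setminus e$. I would split on whether $\mu_n\ge -1$ or $\mu_n<-1$.

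If $\mu_n\ge -1$, then $H$ contains no induced $P_3$ (an induced $P_3$ would force $\mu_n\le -\sqrt 2$ by Cauchy interlacing), so $H$ is a disjoint union of complete graphs $K_{n_1}+\dots+K_{n_r}$ with $n_1\ge\dots\ge n_r$. A direct spectral count gives $P(H)=n-n_1$, so $P(H)\ge 1$ for $H\ne K_n$, with equality forcing $n_1=n-1$ and hence $H=K_{n-1}+K_1$. If instead $\mu_n<-1$, then in the subcase $\mu_1\le n-2$ one already has $P(H)\ge(n-1-\mu_1)+(-\mu_n-1)>1$, so $\sigma(K_n,H)>2$ strictly.

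The main obstacle is the remaining subcase $\mu_n<-1$ and $\mu_1>n-2$. Here $\mu_1\le\Delta(H)$ forces $\Delta(H)=n-1$, so $H$ has a universal vertex $v$; choosing any non-edge $\{a,b\}$ (which exists since $H\ne K_n$), the principal $3\times 3$ submatrix of $A(H)$ on $\{v,a,b\}$ is the adjacency matrix of $P_3$ with spectrum $\{\sqrt 2,0,-\sqrt 2\}$, and Cauchy interlacing yields $\mu_2\ge 0$, whence $(\mu_2+1)_+\ge 1$ and $P(H)\ge 1$. For the equality characterization in this subcase, the interlacing forces $\mu_2=0$ and $\mu_i\le -1$ for every $i\ge 3$; since $H$ is connected (having a universal vertex) and $\mu_2\le 0$, Smith's classical theorem identifies $H$ as a complete multipartite graph $K_{n_1,\dots,n_r}$, and the nullity of such a graph (for $r\ge 2$) equals $n-r$. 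In our situation the nullity is exactly $1$, so $r=n-1$, which forces one part of size $2$ and $n-2$ singletons, giving $H=K_{2,1,\dots,1}=K_n\setminus e$.
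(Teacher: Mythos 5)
Your argument is correct; I checked each step and found no gap. It follows the same skeleton as the paper's proof but replaces both of its imported ingredients with self-contained arguments. Your identity $\sigma(K_n,H)=2\sum_{i=2}^{n}(\mu_i+1)_+$ is precisely the Jovanovi\'c--Stani\'c formula that the paper quotes as Theorem~\ref{jov} (there written $2\bigl(n^*-1+\sum_{i=2}^{n^*}\lambda_i\bigr)$ with $n^*$ the number of eigenvalues at least $-1$), and deriving it from $\sum_i\mu_i=0$ and $\mu_1\le n-1$ is a clean substitute for the citation. The genuine divergence is in the lower bound and the equality analysis: the paper shows that $\sigma(K_n,H)\le 2$ forces $\lambda_1>0$, $\lambda_2\le 0$, $\lambda_3<0$ and then invokes the quoted Lemma~\ref{lemS} of Abdollahi--Oboudi, which classifies such graphs as $K_n$, $K_{n-1}+K_1$ or $K_n\setminus e$. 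You instead split on $\mu_n\ge -1$ versus $\mu_n<-1$ and on $\mu_1\le n-2$ versus $\mu_1>n-2$, and prove the needed structure directly: $P_3$-freeness forces a disjoint union of cliques (giving $P(H)=n-n_1$), the subcase $\mu_1\le n-2$ with $\mu_n<-1$ is strictly worse than $2$, and in the remaining subcase a universal vertex plus interlacing with an induced $P_3$, Smith's theorem, and the nullity count $n-r$ for $K_{n_1,\dots,n_r}$ pin down $K_n\setminus e$. In effect you reprove the relevant portion of Lemma~\ref{lemS}; your route is longer but independent of the $\ell^2$-cospectrality paper, and it yields the explicit strict bound $\sigma(K_n,H)>2$ outside the two extremal graphs, while the paper's route buys brevity by citation.
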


\begin{thm}\label{knnn}
For every integer $n\geq 2$, $\text{cs}(K_{n,n})=2(n-\sqrt{n^2-1})$. Moreover, $\text{cs}(K_{n,n})=\sigma(K_{n,n},H)$ for some graph $H$ if and only if
$H\cong K_{n-1,n+1}$.
\end{thm}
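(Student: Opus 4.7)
My plan is to establish the upper bound by a direct spectral computation and then obtain the matching lower bound by splitting on $m=|E(H)|$. Using the standard formula $Spec(K_{a,b}) = \{\sqrt{ab}, 0^{(a+b-2)}, -\sqrt{ab}\}$, the spectra of $K_{n,n}$ and $K_{n-1,n+1}$ are $\{n, 0^{(2n-2)}, -n\}$ and $\{\sqrt{n^2-1}, 0^{(2n-2)}, -\sqrt{n^2-1}\}$, so pairing sorted eigenvalues yields $\sigma(K_{n,n}, K_{n-1,n+1}) = 2(n-\sqrt{n^2-1})$. Write $K := 2(n-\sqrt{n^2-1})$; for the lower bound I would fix an arbitrary $H \not\cong K_{n,n}$ on $2n$ vertices with eigenvalues $\mu_1 \geq \cdots \geq \mu_{2n}$ and $m=|E(H)|$, and show $\sigma(K_{n,n},H) \geq K$ with equality only at $H \cong K_{n-1,n+1}$.

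For $m \leq n^2-1$, the inequalities $\mu_1 \geq |\mu_{2n}|$ (Perron--Frobenius) and $\mu_1^2 + \mu_{2n}^2 \leq \sum\mu_i^2 = 2m$ first rule out $|\mu_{2n}| > n$. Assuming additionally $\mu_1 \leq n$, I expand
$$\sigma(K_{n,n},H) = (n-\mu_1) + (n+\mu_{2n}) + \sum_{i=2}^{2n-1}|\mu_i|$$
and combine the two elementary inequalities $\sum_{i=2}^{2n-1}|\mu_i| \geq \sqrt{2m - \mu_1^2 - \mu_{2n}^2}$ and $(\mu_1-\mu_{2n})^2 \leq 2(\mu_1^2+\mu_{2n}^2) \leq 4m$ to get the clean estimate $\sigma \geq 2n - 2\sqrt m \geq K$. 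Tracing equality through the whole chain forces $\mu_1 = -\mu_{2n} = \sqrt m$ and $\mu_2 = \cdots = \mu_{2n-1} = 0$, so $H = K_{a,b} + (2n-a-b)K_1$ with $ab = m = n^2-1$; the constraint $a+b \leq 2n$ rewrites as $(d-n)^2 \leq 1$ where $d \mid (n^2-1)$, and together with $\gcd(n,n^2-1)=1$ this forces $(a,b)=(n-1,n+1)$. If instead $\mu_1 > n$, I would use $\sum_{i=2}^{2n-1}|\mu_i| \geq \bigl|\sum_{i=2}^{2n-1}\mu_i\bigr| = \mu_1+\mu_{2n}$ to get $\sigma \geq 2(\mu_1+\mu_{2n})$; the assumption $\sigma < K$ then yields $\mu_{2n} < -\sqrt{n^2-1}$, hence $\mu_1^2+\mu_{2n}^2 > 2n^2-1 > 2m$, contradicting $\sum\mu_i^2 = 2m$.

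For $m \geq n^2$ I would invoke Mantel's theorem: any triangle-free graph on $2n$ vertices has at most $n^2$ edges, with $K_{n,n}$ the unique extremal example. So $H \not\cong K_{n,n}$ with $m \geq n^2$ contains a triangle, and Cauchy interlacing against the $K_3$-principal submatrix (spectrum $\{2,-1,-1\}$) gives $\mu_{2n-1}(H) \leq -1$. Hence $\sigma(K_{n,n},H) \geq \sum_{i=2}^{2n-1}|\mu_i| \geq 1$, and since $n-\sqrt{n^2-1} < \tfrac12$ for $n \geq 2$ one has $1 > K$, finishing this case with strict inequality. The delicate part is the $m \leq n^2-1$ analysis: one must simultaneously sharpen the two elementary inequalities above to extract both the value $2n-2\sqrt m$ and the spectral characterisation $\{\sqrt m, 0^{2n-2}, -\sqrt m\}$, and then rule out equality in the auxiliary $\mu_1>n$ sub-case; by contrast, the Mantel-plus-interlacing step for $m \geq n^2$ is short and leaves considerable slack since $K < 1/2$.
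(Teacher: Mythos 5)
Your proof is correct, but it takes a genuinely different route from the paper's. The paper notes that $2(n-\sqrt{n^2-1})<\tfrac{1}{3}$ for $n\geq 4$, so any competitor $H$ has $|\lambda_2(H)|<\tfrac{1}{3}$, and then uses the Cao--Yuan characterisations of graphs with $\lambda_2=0$ or $0<\lambda_2<\tfrac{1}{3}$ together with Smith's theorem to enumerate the candidate graphs, eliminating each by direct computation or by interlacing against an induced $K_{1,1,2}$; the cases $n=2,3$ are checked separately. You instead split on $m=|E(H)|$: Mantel's theorem plus interlacing against $K_3$ disposes of $m\geq n^2$ with lots of room to spare, while for $m\leq n^2-1$ the identity $\sum_i\mu_i^2=2m$ and your two Cauchy--Schwarz-type estimates give $\sigma\geq 2n-2\sqrt{m}$, and tracing equality (all middle eigenvalues vanish, $\mu_1=-\mu_{2n}=\sqrt{m}$, then the divisor condition $(d-n)^2\leq 1$ with $\gcd(n,n^2-1)=1$) forces $H\cong K_{n-1,n+1}$. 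Your route avoids the second-eigenvalue structure theorems of Cao--Yuan entirely and needs no separate small-$n$ computations, at the price of a more delicate equality analysis. Two minor points to tidy up: in the sub-case $\mu_1>n$ you should run the contradiction from $\sigma\leq 2(n-\sqrt{n^2-1})$ rather than from strict inequality (the same computation works because $\mu_1>n$ is strict), so that equality is also excluded there; and the passage from the spectrum $\{\sqrt{m},0^{(2n-2)},-\sqrt{m}\}$ to $H\cong K_{a,b}+tK_1$ should be justified explicitly, e.g.\ by the equality case of the energy bound in Theorem \ref{eng}.
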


\begin{thm}\label{knn+1}
For every integer $n\geq 2$, $\text{cs}(K_{n,n+1})=2(\sqrt{n^2+n}-\sqrt{n^2+n-2})$. Moreover, $\text{cs}(K_{n,n+1})=\sigma(K_{n,n+1},H)$ for some graph $H$ if and only if
$H\cong K_{n-1,n+2}$.
\end{thm}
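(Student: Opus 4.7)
Write $\alpha=\sqrt{n^2+n}$ and $\beta=\sqrt{n^2+n-2}$, so that both $K_{n,n+1}$ and $K_{n-1,n+2}$ are complete bipartite graphs on $2n+1$ vertices with spectra $\{\alpha,0,\ldots,0,-\alpha\}$ and $\{\beta,0,\ldots,0,-\beta\}$ (each having $2n-1$ zeros). Coordinate-wise subtraction in decreasing order gives
\[
\text{cs}(K_{n,n+1})\ \le\ \sigma(K_{n,n+1},K_{n-1,n+2})\ =\ 2(\alpha-\beta),
\]
which supplies the upper bound.

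For the matching lower bound I would let $H$ be a graph on $2n+1$ vertices with eigenvalues $\mu_1\ge\cdots\ge\mu_{2n+1}$ and assume $\sigma(K_{n,n+1},H)<2(\alpha-\beta)$; the goal is to force $H\cong K_{n,n+1}$ or $H\cong K_{n-1,n+2}$. Two standard moves combine to give a clean estimate: the triangle inequality gives $|\alpha-\mu_1|+|\alpha+\mu_{2n+1}|\ge|2\alpha-(\mu_1-\mu_{2n+1})|$, and the trace identity $\sum_i\mu_i=0$ yields $\sum_{i=2}^{2n}|\mu_i|\ge|\sum_{i=2}^{2n}\mu_i|=|\mu_1+\mu_{2n+1}|$, so
\[
\sigma(K_{n,n+1},H)\ \ge\ \bigl|2\alpha-(\mu_1-\mu_{2n+1})\bigr|+\bigl|\mu_1+\mu_{2n+1}\bigr|.
\]
Setting $P=\mu_1-\mu_{2n+1}$ and $Q=\mu_1+\mu_{2n+1}$, the assumed upper bound on $\sigma$ traps $P$ in the short window $(2\beta,2\alpha+2(\alpha-\beta))$ with $|Q|<2(\alpha-\beta)$; note that $2(\alpha-\beta)=4/(\alpha+\beta)$ is quite small.

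I would then feed these constraints into the trace-square identity $\sum_i\mu_i^2=2|E(H)|$, using $\mu_1^2+\mu_{2n+1}^2=(P^2+Q^2)/2$ together with $\sum_{i=2}^{2n}\mu_i^2\le(\sum_{i=2}^{2n}|\mu_i|)^2<4(\alpha-\beta)^2$, to produce a short interval for the integer $|E(H)|$. A careful arithmetic argument, exploiting the refined bound $|Q|\le P-2\beta$ that is valid throughout the window, should pin $|E(H)|\in\{n^2+n,(n-1)(n+2)\}$. In either subcase all of the intermediate inequalities above collapse to equalities, which in particular forces $\mu_2=\cdots=\mu_{2n}=0$, so that $H$ has rank-two adjacency spectrum $\{\nu,0,\ldots,0,-\nu\}$ with $\nu\in\{\alpha,\beta\}$.

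To finish, a graph on $2n+1$ vertices with such a spectrum must be a disjoint union $K_{a,b}\cup kK_1$ with $ab=\nu^2$ and $a+b+k=2n+1$. The quadratics $t^2-(2n+1)t+n(n+1)$ and $t^2-(2n+1)t+(n-1)(n+2)$ have discriminants $1$ and $9$, so for $n\ge 3$ the only positive-integer factorizations with $a+b\le 2n+1$ are $(n,n+1)$ and $(n-1,n+2)$, both forcing $k=0$. Hence $H\cong K_{n,n+1}$ (excluded by hypothesis) or $H\cong K_{n-1,n+2}$, as required. The chief technical hurdle will be the arithmetic step that shrinks the integer window for $|E(H)|$ down to $\{n^2+n,(n-1)(n+2)\}$: the soft triangle-inequality bound by itself admits spurious candidates such as $|E(H)|=n^2+n-1$, and excluding these demands the sharper joint control of $P$ and $Q$ described above together with the quadratic bound on $\sum_{i=2}^{2n}\mu_i^2$.
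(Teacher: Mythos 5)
Your upper bound and your final step (a spectrum of the form $\{\nu,0,\ldots,0,-\nu\}$ forces $H\cong K_{a,b}+kK_1$, and the factorization arithmetic then leaves only $K_{n,n+1}$ and $K_{n-1,n+2}$) are sound, but the heart of the lower bound has a genuine gap. The assertion that once $|E(H)|\in\{n^2+n,(n-1)(n+2)\}$ ``all of the intermediate inequalities collapse to equalities,'' forcing $\mu_2=\cdots=\mu_{2n}=0$, does not follow from the constraints you have assembled (trace zero, $\sum_i\mu_i^2=2|E(H)|$, and $\sigma<2(\alpha-\beta)$). For instance, the real sequence $\mu_1=\alpha-\varepsilon$, $\mu_2=\delta$, $\mu_{2n+1}=-(\alpha-\varepsilon+\delta)$, all other entries $0$, has trace zero; choosing $\delta\approx 2\varepsilon$ makes $\sum_i\mu_i^2=2(n^2+n)$ exactly, while $\sigma=\varepsilon+\delta+|\delta-\varepsilon|=4\varepsilon$ is arbitrarily small and $\mu_2\neq 0$. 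So no amount of arithmetic on the integer $|E(H)|$ can rule out small nonzero middle eigenvalues: what excludes them is graph structure, not numerics. (A secondary worry: even the preliminary step of shrinking the integer window for $|E(H)|$ from roughly five candidates down to two is only sketched, and it is not clear the stated joint control of $P$ and $Q$ suffices; but this is moot given the main gap.)

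The paper closes exactly this hole with structural classification theorems keyed to the second-largest eigenvalue. From $\sigma(K_{n,n+1},H)<\sqrt{2}-1$ one gets $\lambda_2(H)<\sqrt{2}-1$, and then the results of Cao--Yuan (no graph has $-1<\lambda_2<0$; $\lambda_2=0$ characterizes complete multipartite graphs; $0<\lambda_2<\tfrac13$ characterizes $(K_1+K_2)\nabla\overline{K_{m}}$), Petrovi\'c (classification of graphs with $0<\lambda_2\le\sqrt{2}-1$), and Smith (one positive eigenvalue) reduce $H$ to a short list; every candidate other than $K_{r,s}+tK_1$ contains $K_{1,1,2}$ or $(K_1+K_2)\nabla K_1$ as an induced subgraph, whose third eigenvalue is $-1$, so interlacing gives $|\lambda_{2n}(H)|\ge 1$ and hence $\sigma\ge 1$. (The paper also disposes of orders at most $10$ by direct computation before running this argument.) If you want to salvage your approach, you would need to import some such structure theorem to eliminate graphs with small positive $\lambda_2$; the trace identities alone cannot do it.
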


\section{\bf $\ell^1$-Cospectrality  of graphs with at most one edge}
In this section we will determine the cospectrality  of
 the graphs with at most one edge. Let $G$ be a simple graph of order $n$ and $\lambda_1\geq\cdots\geq \lambda_n $ be the eigenvalues of $G$. Recall that the energy of $G$ is defined as $E(G)=\displaystyle\sum_{i=1}^{n} |\lambda_i|$. 
We need the following Theorem in the sequel.\\

\begin{thm}[See \cite{ccgh}]\label{eng} 
Let $G$ be a graph with m edges. Then\\
 $$E(G)\geq 2\sqrt{m},$$ \\
with equality if and only if $G$ is a complete bipartite graph plus arbitrarily isolated vertices.
\end{thm}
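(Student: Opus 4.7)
The plan is to establish the inequality by manipulating the two elementary spectral identities that come from traces of $A(G)$ and $A(G)^2$, and then handle the equality case separately (which will be the real work).

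For the inequality itself, I would start from $\sum_{i=1}^n\lambda_i=\operatorname{tr}(A(G))=0$ and $\sum_{i=1}^n\lambda_i^2=\operatorname{tr}(A(G)^2)=2m$. Squaring $E(G)=\sum_i|\lambda_i|$ gives
$$E(G)^2=\sum_{i=1}^n\lambda_i^2+2\sum_{i<j}|\lambda_i||\lambda_j|=2m+2\sum_{i<j}|\lambda_i\lambda_j|.$$
Squaring the trace identity $\sum_i\lambda_i=0$ yields $\sum_{i<j}\lambda_i\lambda_j=-m$, so the triangle inequality gives $\sum_{i<j}|\lambda_i\lambda_j|\ge\bigl|\sum_{i<j}\lambda_i\lambda_j\bigr|=m$. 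Combining, $E(G)^2\ge 4m$, i.e. $E(G)\ge 2\sqrt{m}$.

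For the equality case, the forward direction is a direct computation: if $G=K_{a,b}+rK_1$ then $\operatorname{Spec}(G)=\{\sqrt{ab},-\sqrt{ab},0,\dots,0\}$, so $E(G)=2\sqrt{ab}=2\sqrt{m}$. The harder direction is the converse. Equality in the triangle inequality forces all nonzero products $\lambda_i\lambda_j$ to share a common sign; combined with $\sum_i\lambda_i=0$ and the fact (Perron--Frobenius) that for $m>0$ the largest eigenvalue is strictly positive, this forces $G$ to have exactly one positive and one negative eigenvalue, with the rest zero. By the trace-zero condition the two nonzero eigenvalues are $\mu$ and $-\mu$, and $2\mu^2=2m$.

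The main obstacle I expect is the final classification: showing that a graph whose spectrum is $\{\mu,0,\dots,0,-\mu\}$ must be a complete bipartite graph plus isolated vertices. I would argue as follows. Each connected component with at least one edge contributes at least one strictly positive eigenvalue (its own Perron eigenvalue), so there is exactly one nontrivial component $C$; all other components are isolated vertices. On $C$ the adjacency matrix has rank $2$, so $A(C)^2=\mu^2 P$ where $P$ is the orthogonal projection onto the span of the eigenvectors for $\pm\mu$; in particular every diagonal entry of $A(C)^2$ equals a vertex degree, and off-diagonal entries count common neighbors. Using the rank-$2$ structure one checks that $C$ is triangle-free (since $\operatorname{tr}(A(C)^3)=\mu^3+(-\mu)^3=0$) and that any two vertices in the same part of the natural bipartition have identical neighborhoods, which forces $C\cong K_{a,b}$. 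This classification step is where a careful argument (or appeal to Smith's characterization of graphs with one positive eigenvalue) is most needed; the rest is routine.
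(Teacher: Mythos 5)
The paper does not prove this statement at all: Theorem \ref{eng} is imported verbatim from the cited reference \cite{ccgh} and used as a black box, so there is no internal proof to compare yours against. Judged on its own, your argument is the standard (McClelland-type) proof and is essentially correct. The inequality part is complete: from $\sum_i\lambda_i=0$ and $\sum_i\lambda_i^2=2m$ you get $\sum_{i<j}\lambda_i\lambda_j=-m$, hence $E(G)^2=2m+2\sum_{i<j}|\lambda_i\lambda_j|\geq 2m+2\bigl|\sum_{i<j}\lambda_i\lambda_j\bigr|=4m$. The equality analysis correctly reduces (for $m>0$) to a spectrum of the form $\{\mu,0,\dots,0,-\mu\}$ with $\mu=\sqrt{m}$. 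The only place needing more care is the final classification, and you flag it yourself. Two remarks there: (i) your phrase ``the natural bipartition'' presupposes that the nontrivial component $C$ is bipartite, which does not follow from triangle-freeness alone; it does follow because $-\lambda_1(C)$ is an eigenvalue of the connected graph $C$, so you should say that. (ii) The cleanest route is the one you mention in passing: Smith's theorem (which this paper itself records as Theorem \ref{Sm}) gives that the non-isolated vertices form $K_{n_1,\dots,n_k}$, and since $K_{n_1,\dots,n_k}$ has exactly $k-1$ negative eigenvalues (or, if you prefer, contains a triangle when $k\geq 3$ while $\operatorname{tr}(A^3)=0$ here), one gets $k=2$ immediately. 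A last pedantic point: for $m=0$ equality also holds while $G$ is a null graph, so the ``if and only if'' as stated really concerns $m\geq 1$; this is a defect of the quoted statement rather than of your proof.
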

 
\noindent{\bf Proof of Theorem \ref{empty}.} Let $H$ be a simple graph of order $n$ and size $m$. Clearly, we have $\sigma(nK_1,H)=E(H)$. Since $H$ is not isomorphic to $nK_1$, by Theorem \ref{eng}, the minimum value of $E(H)$ is 2 and it happens whenever $m=1$. This shows that $H\cong K_2+(n-2)K_1$. $\hfill \Box$ \\

\noindent{\bf Proof of Theorem \ref{oneedge}.} It is easy to see that $\text{cs}(K_2)=\sigma(K_2,2K_1)=2$. Suppose $H$ is a simple graph of order $n$ and size $m$ with eigenvalues $\lambda_1\geq \cdots \geq \lambda_n$. For every integer $n\geq 3$, 
it follows from Theorem \ref{eng} and triangular inequality that 
\begin{eqnarray*}
\sigma(K_2+(n-2)K_1,H)&=&\mid \lambda_1 - 1\mid + \displaystyle\sum_{i=2}^{n-1} \mid \lambda_i \mid + \mid \lambda_n+1 \mid \\
 &=& E(H)-\mid\lambda_1\mid-\mid\lambda_n\mid+\mid\lambda_1-1\mid+\mid\lambda_n+1\mid \\
 &\geq & 2\sqrt{m}-\mid\lambda_1\mid-\mid\lambda_n\mid+|\lambda_1|-1+|\lambda_n|-1\\
 &\geq& 2(\sqrt{m}-1),
\end{eqnarray*}
Note that if $m=0$, then $\sigma(K_2+(n-2)K_1,H)=2$. Now assume that $m>0$. 
Since $H$ is not isomorphic to $K_2+(n-2)K_1$, the minimum value of $\sigma(K_2+(n-2)K_1,H)$ is $2(\sqrt{2}-1)$ and it happens whenever $m=2$. The graphs with two edges are $2K_2+(n-4)K_1$ and $P_3+(n-3)K_1$ and  
$$\sigma(K_2+(n-2)K_1, 2K_2+(n-4)K_1)=2,$$
$$\sigma(K_2+(n-2)K_1, P_3+(n-3)K_1)=2(\sqrt{2}-1).$$
It shows that $H\cong P_3+(n-3)K_1$. This completes the proof. $\hfill \Box$
 
\section{\bf $\ell^1$-Cospectrality of the complete graph}

In this section we show that for every integer $n\geq 2$, the minimum value of $\sigma(K_n,H)$ happens whenever $H\cong K_n\setminus e$ or $K_{n-1}+K_1$, and $\text{cs}(K_n)=2$. We need the following results.

\begin{thm}[\cite{js}, part (i) of Theorem 3.4]\label{jov}
Let $G$ be an arbitrary graph, and let $n^*$ denote the number of its eigenvalues which are greater than or equal to $-1$. Then the following holds:
$$\sigma(K_n,G)=2\big(n^{*}-1+\displaystyle\sum_{i=2}^{n^{*}}\lambda_i(G)\big).$$
\end{thm}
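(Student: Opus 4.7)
The plan is a direct computation from the definition of $\sigma$. The spectrum of $K_n$ consists of $n-1$ with multiplicity one and $-1$ with multiplicity $n-1$, so writing $\mu_1\geq\cdots\geq\mu_n$ for the eigenvalues of $G$, one has
\[\sigma(K_n,G)=|n-1-\mu_1|+\sum_{i=2}^n|\mu_i+1|.\]
The first step is to strip off every absolute value sign. Since the largest adjacency eigenvalue is bounded above by the maximum degree, $\mu_1\leq n-1$, so the leading term opens as $n-1-\mu_1$. By the very definition of $n^*$, the quantity $\mu_i+1$ is non-negative exactly for $2\leq i\leq n^*$ and strictly negative for $n^*<i\leq n$; splitting the remaining sum accordingly and collecting the constants gives
\[\sigma(K_n,G)=2n^*-2-\mu_1+\sum_{i=2}^{n^*}\mu_i-\sum_{i=n^*+1}^{n}\mu_i.\]

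The single non-mechanical step is then to invoke the trace identity $\sum_{i=1}^n\mu_i=0$, which holds because $A(G)$ has zero diagonal. Solving it for the tail sum gives $-\sum_{i=n^*+1}^n\mu_i=\mu_1+\sum_{i=2}^{n^*}\mu_i$, and substituting collapses the previous display to $2(n^*-1+\sum_{i=2}^{n^*}\mu_i)$, which is exactly the claimed formula.

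I do not anticipate a serious obstacle; once the trace identity is brought in, the proof is essentially bookkeeping. The only subtlety worth flagging is the behavior at the boundary values of $n^*$. When $n^*=n$ the second sum is empty and the trace identity still yields the right cancellation; and the alternative extreme $n^*=1$ cannot actually occur, because $\sum_{i\geq 2}\mu_i<-(n-1)$ combined with $\sum_{i=1}^n\mu_i=0$ would force $\mu_1>n-1$, contradicting the degree bound. Consequently the sum $\sum_{i=2}^{n^*}\mu_i$ never suffers from degenerate indexing, and the single identity stated in the theorem covers all graphs uniformly.
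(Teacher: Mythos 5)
Your computation is correct: expanding $\sigma(K_n,G)=|n-1-\mu_1|+\sum_{i=2}^n|\mu_i+1|$, removing the absolute values via $\mu_1\le n-1$ and the definition of $n^*$, and then eliminating the tail sum with the trace identity does yield $2\bigl(n^*-1+\sum_{i=2}^{n^*}\mu_i\bigr)$, and your observation that $n^*=1$ is impossible (and harmless anyway) is a nice sanity check. Note, however, that the paper offers no proof to compare against: this statement is imported verbatim from Jovanovi\'c and Stani\'c \cite{js} (part (i) of their Theorem 3.4) and is used as a black box, so your argument serves as a correct, self-contained verification of the cited result rather than an alternative to anything in the present paper.
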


\begin{lem} [See \cite{ao}]\label{speckne}
For every integer $n\geq2$ and every arbitrary edge $e$ of $K_n$,
$$Spec(K_n\setminus e)=\Big\{\frac{n-3+\sqrt{n^2+2n-7}}{2},0,\underbrace{-1,\ldots,-1}_{n-3},\frac{n-3-\sqrt{n^2+2n-7}}{2}\Big\}.$$
\end{lem}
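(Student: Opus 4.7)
The plan is to exploit the structural observation that $K_n \setminus e$ is isomorphic to the join $K_{n-2} \nabla 2K_1$. Writing $e = \{u, v\}$, the two endpoints $u$ and $v$ are mutually nonadjacent but each is adjacent to every vertex of $V(K_n) \setminus \{u, v\}$, and those remaining $n-2$ vertices induce a complete graph. Hence the adjacency matrix of $K_n \setminus e$ has the block form characteristic of a join, with the $2K_1$ side played by $\{u,v\}$ and the $K_{n-2}$ side played by the rest.

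I would then apply the standard formula for the spectrum of a join of two regular graphs. Both summands are regular: $K_{n-2}$ has degree $r_1 = n-3$ and order $n_1 = n-2$, while $2K_1$ has degree $r_2 = 0$ and order $n_2 = 2$. The partition of $V(K_n \setminus e)$ into the two sides of the join is equitable, so the spectrum of $K_{n-2} \nabla 2K_1$ is assembled from all eigenvalues of $K_{n-2}$ other than $r_1$ (contributing $n-3$ copies of $-1$), all eigenvalues of $2K_1$ other than $r_2$ (contributing a single $0$), together with the two eigenvalues of the equitable-partition quotient matrix
$$B \;=\; \begin{pmatrix} r_1 & n_2 \\ n_1 & r_2 \end{pmatrix} \;=\; \begin{pmatrix} n-3 & 2 \\ n-2 & 0 \end{pmatrix}.$$

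Finally, I would compute these last two eigenvalues from the characteristic polynomial $\det(xI - B) = x^2 - (n-3)x - 2(n-2)$. The discriminant simplifies to $(n-3)^2 + 8(n-2) = n^2 + 2n - 7$, so the quadratic formula yields the claimed pair $\frac{n-3 \pm \sqrt{n^2 + 2n - 7}}{2}$. Tallying multiplicities, $(n-3) + 1 + 2 = n$, matches the order of the graph, and the multiset assembles into the spectrum stated in the lemma. No genuine obstacle arises in this argument; the only thing warranting a separate check is the boundary case $n = 2$, where $K_{n-2}$ is the empty graph on zero vertices and one should directly verify that $K_2 \setminus e = 2K_1$ has spectrum $\{0, 0\}$.
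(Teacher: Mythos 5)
Your proof is correct. The paper itself gives no argument for this lemma --- it is simply imported from the cited reference \cite{ao} --- so there is nothing to compare against line by line, but your route is the standard one and it is complete: the identification $K_n\setminus e\cong K_{n-2}\nabla 2K_1$, the join-of-regular-graphs spectrum formula (equivalently, the equitable partition into the two sides of the join), and the quotient-matrix computation with discriminant $(n-3)^2+8(n-2)=n^2+2n-7$ all check out, and the multiplicities sum to $n$. Your flag on the boundary case $n=2$ is apt, since there the displayed multiset involves ``$n-3=-1$ copies of $-1$'' and degenerates; the direct verification that $K_2\setminus e=2K_1$ has spectrum $\{0,0\}$ disposes of it. (A sanity check at $n=3$, where the formula gives $\{\sqrt2,0,-\sqrt2\}=Spec(P_3)$, would be a natural addition but is not needed.)
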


\begin{lem} [See \cite{ao}]\label{lemS}
Let $G$ be a graph with eigenvalues $\lambda_1\geq \lambda_2\geq \lambda_3\geq \cdots \geq \lambda_n$.
 The graph $G$ is isomorphic to one the following graphs if and only if $\lambda_1>0$, $\lambda_2\leq 0$ and $\lambda_3<0$.
\begin{enumerate}
\item $G\cong K_n$,
\item $G\cong K_1+K_{n-1}$, \item $G\cong K_{2,1,\dots,1}=
K_n\setminus e$ for an edge $e$ of $K_n$.
\end{enumerate}
\end{lem}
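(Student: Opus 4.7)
The plan is to prove both directions separately. The forward direction is a routine spectrum calculation. The spectrum of $K_n$ is $\{n-1,-1,\dots,-1\}$, so $\lambda_1=n-1>0$, $\lambda_2=\lambda_3=-1<0$. The spectrum of $K_1+K_{n-1}$ is $\{n-2,0,-1,\dots,-1\}$, giving $\lambda_1>0$, $\lambda_2=0$, $\lambda_3=-1<0$ (for $n\geq 3$). The spectrum of $K_n\setminus e$ is supplied by Lemma~\ref{speckne}, and inspection shows $\lambda_1>0$, $\lambda_2=0$, $\lambda_3=-1<0$ (for $n\geq 3$). Small cases ($n=2,3$) should be checked separately by hand, since some of these three graphs may coincide or the spectra may degenerate.

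For the converse, the approach is to first exploit the condition $\lambda_1>0$ and $\lambda_2\leq 0$, i.e.\ $G$ has exactly one positive eigenvalue. The key structural input is a classical theorem of J.\,H.\ Smith (1970) which characterises such graphs: a graph has at most one positive eigenvalue if and only if its non-isolated vertices induce a complete multipartite graph. Hence $G\cong K_{n_1,\dots,n_k}+sK_1$ for some $k\geq 1$, some part sizes $n_1,\dots,n_k\geq 1$, and some number $s\geq 0$ of isolated vertices, with $n=s+\sum_{i=1}^k n_i$.

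The second step is to translate the remaining condition $\lambda_3<0$ into a combinatorial restriction. The eigenvalue $0$ of $K_{n_1,\dots,n_k}$ has multiplicity $\sum_{i=1}^k(n_i-1)=(n-s)-k$, and each isolated vertex contributes an extra $0$, so the multiplicity of $0$ in $\operatorname{Spec}(G)$ is $n-k$. The condition $\lambda_3<0$ combined with $\lambda_1>0$, $\lambda_2\leq 0$ forces the multiplicity of $0$ to be at most $1$, hence $k\geq n-1$.

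The final step is to enumerate. If $k=n$, then $s=0$ and every $n_i=1$, so $G\cong K_n$. If $k=n-1$, there are two subcases according to whether the deficit $n-k=1$ comes from an isolated vertex or from a doubled part: either $s=1$ and every $n_i=1$, giving $G\cong K_{n-1}+K_1$; or $s=0$ and exactly one part has size $2$ (the rest size $1$), giving $G\cong K_{2,1,\dots,1}=K_n\setminus e$. The main potential obstacle is justifying or citing Smith's theorem precisely; once that structural reduction to complete multipartite plus isolated vertices is in hand, the counting argument on the multiplicity of $0$ is elementary and closes the proof.
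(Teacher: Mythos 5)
The paper gives no proof of this lemma --- it is imported verbatim from \cite{ao} --- so there is no in-paper argument to compare against; your proof is correct and follows the natural (and, in substance, the same) route as the cited source: Smith's theorem (Theorem \ref{Sm} here) reduces $G$ to $K_{n_1,\dots,n_k}+sK_1$, the condition $\lambda_1>0$, $\lambda_2\le 0$, $\lambda_3<0$ caps the multiplicity of the eigenvalue $0$ at one, forcing $n-k\le 1$, and the enumeration of $k\in\{n-1,n\}$ yields exactly the three listed graphs. Two details are worth making explicit when you write this up: first, since $\lambda_1>0$ the graph has an edge, so $k\ge 2$ and the $k\times k$ quotient matrix of $K_{n_1,\dots,n_k}$ is nonsingular (its determinant is $(-1)^{k-1}(k-1)\prod_i n_i$), which is what guarantees that the multiplicity of $0$ is exactly $\sum_i(n_i-1)$ and not larger; second, the lemma implicitly assumes $n\ge 3$, since for $n=2$ both $K_1+K_{n-1}$ and $K_n\setminus e$ degenerate to $2K_1$, whose largest eigenvalue is $0$ --- your separate treatment of small cases should note this.
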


\noindent{\bf Proof of Theorem \ref{kn}.} The result easily  follows for $n=2$. So we may assume that $n\geq 3$. It is easy to see that $\sigma(K_n,K_{n-1}+K_1)=2$. By Theorem \ref{jov}, Lemma \ref{speckne} and the fact the sum of eigenvalues of a simple graph is zero, we obtain
\begin{eqnarray*}
\sigma(K_n,K_n\setminus e)&=&2\big(n-2+\displaystyle\sum_{i=2}^{n-1}\lambda_i(K_n\setminus e)\big)\\
&=&2 \big(n-2-\lambda_1(K_n\setminus e)-\lambda_n(K_n\setminus e)\big)\\
&=&2.
\end{eqnarray*}
Let $H$ be a simple graph of order $n$ and let $n^*$ denote the number of eigenvalues of $H$ which are greater than or equal to $-1$. To complete the proof, it is sufficient to show that if $\sigma(K_n,H)\leq2$ then $H$ is isomorphic to one of the following graphs: $K_n$, $K_{n-1}+K_1$ or $K_n\setminus e$. By Theorem \ref{jov},
$$\sigma(K_n,H)=2\big(n^*-1+\displaystyle\sum_{i=2}^{n^*}\lambda_i(H)\big).$$
If $\sigma(K_n,H)\leq 2$, it follows that 
\begin{equation}\label{eq1}
n^*+\displaystyle\sum_{i=2}^{n^*}\lambda_i(H)\leq 2.
\end{equation}
Let ${n^*}_{\geq 0}$ and ${n^*}_{<0}$ denote the cardinality of the following sets $A$ and $B$, respectively,
$$A=\{ \lambda_i(H) \mid 0\leq \lambda_i(H) \},$$
$$B=\{ \lambda_i(H) \mid -1\leq \lambda_i(H)<0 \}.$$
By Inequality \ref{eq1}, 
$${n^*}_{\geq 0}+{n^*}_{<0}+\displaystyle\sum_{\lambda_i(H)\in A \; \text{and}\; i>1}\lambda_i(H)+\displaystyle\sum_{\lambda_i(H)\in B}\lambda_i(H)\leq 2.$$
Since ${-n^*}_{<0}\leq\displaystyle\sum_{\lambda_i(H)\in B}\lambda_i(H)$, 
\begin{equation}\label{eq2}
{n^*}_{\geq 0}+\displaystyle\sum_{\lambda_i(H)\in A}\lambda_i(H)\leq 2.
\end{equation}
Since $\sigma(K_n,nK_1)=2n-2$ and $n\geq 3$, we can assume that $H$ has at least one edge. It implies that $\lambda_1(H)> 0$ and so ${n^*}_{\geq 0}\geq 1$. By Inequality \ref{eq2}, we have the following cases:\\

\noindent {\bf Case 1.} \; ${n^*}_{\geq 0}=1$. Then both of $\lambda_2(H)$ and $\lambda_3(H)$ are negative.\\

\noindent {\bf Case 2.} \; ${n^*}_{\geq 0}=2$. Then $\lambda_2(H)=0$ and $\lambda_3(H)<0$.\\

Therefore by Lemma \ref{lemS}, $H$ is isomorphic to one of the following graphs: $K_n$, $K_{n-1}+K_1$ or $K_n\setminus e$. This completes the proof. $\hfill \Box$

\section{\bf $\ell^1$-Cospectrality of complete bipartite graphs}
We need the following results to prove  Theorem \ref{knnn}.

\begin{thm}[Theorem 9.1.1 of \cite{Godsil}]\label{interlacing} Let $G$ be a graph of order $n$ and $H$ be an induced subgraph of $G$ with order $m$. Suppose that $\lambda_1(G)\geq\cdots\geq\lambda_n(G)$ and $\lambda_1(H)\geq\cdots\geq\lambda_m(H)$ are the eigenvalues of $G$ and $H$, respectively. Then for every $i$, $1\leq i\leq m$, $\lambda_i(G)\geq\lambda_i(H)\geq\lambda_{n-m+i}(G)$.
\end{thm}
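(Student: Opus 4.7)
The plan is to recognize the statement as the classical Cauchy interlacing theorem applied to the pair $(A(G),A(H))$, and to derive it from the Courant-Fischer min-max characterization of eigenvalues of a real symmetric matrix. The structural observation I would start from is that, since $H$ is an \emph{induced} subgraph of $G$, relabelling the vertices of $G$ so that those of $H$ come first presents $A(H)$ as the principal $m\times m$ submatrix of $A(G)$ occupying the top-left corner. Let $P:\mathbb{R}^m\to\mathbb{R}^n$ denote the natural isometric embedding $(y_1,\ldots,y_m)\mapsto(y_1,\ldots,y_m,0,\ldots,0)$; then $y^TA(H)y=(Py)^TA(G)(Py)$ and $\|Py\|=\|y\|$ for every $y\in\mathbb{R}^m$, so the Rayleigh quotients of $A(G)$ and $A(H)$ agree along the image of $P$.

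For the upper bound $\lambda_i(H)\le\lambda_i(G)$, I would invoke the max-min form
\[
\lambda_i(H)=\max_{\substack{S\subseteq\mathbb{R}^m\\\dim S=i}}\;\min_{\substack{y\in S\\y\ne 0}}\frac{y^TA(H)y}{y^Ty}.
\]
Given an optimal $i$-dimensional $S^{*}\subseteq\mathbb{R}^m$, the image $PS^{*}$ is an $i$-dimensional subspace of $\mathbb{R}^n$ on which the infimum of the Rayleigh quotient of $A(G)$ equals $\lambda_i(H)$. Since $\lambda_i(G)$ is itself the supremum of such infima over \emph{all} $i$-dimensional subspaces of $\mathbb{R}^n$, this immediately yields $\lambda_i(G)\ge\lambda_i(H)$.

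For the lower bound $\lambda_{n-m+i}(G)\le\lambda_i(H)$, I would dually apply the min-max form
\[
\lambda_i(H)=\min_{\substack{S\subseteq\mathbb{R}^m\\\dim S=m-i+1}}\;\max_{\substack{y\in S\\y\ne 0}}\frac{y^TA(H)y}{y^Ty}
\]
together with the analogous representation of $\lambda_{n-m+i}(G)$ as the minimum, over $(m-i+1)$-dimensional subspaces $T\subseteq\mathbb{R}^n$, of the maximum Rayleigh quotient of $A(G)$ on $T$; pushing an optimal $(m-i+1)$-dimensional subspace of $\mathbb{R}^m$ through $P$ delivers the desired inequality. The only real obstacle is the index bookkeeping: one must verify that in the min-max form the witness-dimension for the $j$-th eigenvalue of an order-$n$ matrix is $n-j+1$, and then solve $n-(n-m+i)+1=m-i+1$ so that the dimension on the $G$ side matches the one produced via $P$ on the $H$ side. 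Combining the two estimates yields $\lambda_i(G)\ge\lambda_i(H)\ge\lambda_{n-m+i}(G)$ for every $1\le i\le m$, as required.
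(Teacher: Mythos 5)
The paper does not prove this statement at all: it is imported verbatim as Theorem 9.1.1 of Godsil and Royle's \emph{Algebraic Graph Theory} and used as a black box, so there is no internal proof to compare against. Your argument is the standard Courant--Fischer proof of Cauchy interlacing and it is correct: the observation that an induced subgraph corresponds to a principal submatrix is exactly what makes the theorem apply, the isometric embedding $P$ correctly transports Rayleigh quotients, the max-min step gives $\lambda_i(G)\geq\lambda_i(H)$, and your index bookkeeping in the min-max step (witness dimension $n-j+1$ for the $j$-th eigenvalue, so $n-(n-m+i)+1=m-i+1$ on both sides) is right. The only cosmetic caveat is that you should make explicit that the subspace $PS^{*}$ is merely \emph{a} competitor in the outer optimization over subspaces of $\mathbb{R}^n$, not necessarily the optimal one, which is precisely why one gets an inequality rather than an equality; your write-up already uses this correctly.
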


\begin{thm}[Theorem 1 of \cite{CY}] \label{eigenvalue}
Let $G$ be a simple graph of order $n$ without isolated vertices. If $\lambda_2(G)$ is the second largest eigenvalue of $G$, then
\begin{enumerate}
\item $\lambda_2(G)=-1$ iff $G$ is a complete graph with at least two vertices.
\item $\lambda_2(G)=0$ iff $G$ is a complete $k$-partite graph with $2\leq k\leq n - 1$.
\item There exists no graph $G$ such that $-1 \leq \lambda_2(G) \leq 0$.
\end{enumerate}
\end{thm}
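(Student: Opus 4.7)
\medskip

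\noindent\textbf{Proof plan for Theorem \ref{eigenvalue}.} My plan is to treat the three parts together by first verifying the ``if'' directions of (1) and (2) by direct spectral computation, and then deducing the ``only if'' directions together with (3) from a single structural lemma. The ``if'' direction of (1) is standard: $A(K_n)=J-I$ yields $\mathrm{Spec}(K_n)=\{n-1,-1,\dots,-1\}$, so $\lambda_2(K_n)=-1$. For the ``if'' direction of (2), given $G=K_{n_1,\dots,n_k}$ I will exhibit $n-k$ linearly independent vectors in $\ker A(G)$ (for each part, a basis of the subspace of vectors supported on that part and summing to zero) and reduce the remaining spectrum to the $k\times k$ quotient matrix $B$ with $B_{st}=n_t$ for $s\ne t$ and $B_{ss}=0$. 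Symmetrizing $B$ via conjugation by $\mathrm{diag}(\sqrt{n_1},\dots,\sqrt{n_k})$ produces $\sqrt{n}\,\sqrt{n}^{T}-\mathrm{diag}(n_1,\dots,n_k)$, a rank-one positive semidefinite perturbation of a negative-definite matrix; by the standard interlacing consequence, $B$ has exactly one positive and $k-1$ negative eigenvalues. Hence $\lambda_2(G)=0$ precisely when $k\le n-1$.

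The heart of the proof is the following lemma, which I will establish by a forbidden-subgraph argument based on Cauchy interlacing (Theorem \ref{interlacing}): \emph{if $G$ has no isolated vertex and $\lambda_2(G)\le 0$, then $G$ is complete multipartite} (with $G\cong K_n$ when all parts are singletons). It suffices to show that $G$ contains no induced $K_1+K_2$, since a graph has no induced $K_1+K_2$ iff its complement has no induced $P_3$ iff its complement is a disjoint union of complete graphs iff the graph itself is complete multipartite. Assume for contradiction that $\{a,b,c\}$ induces $K_1+K_2$ with edge $bc$ and $a$ non-adjacent to $b,c$, and pick a neighbor $d\notin\{b,c\}$ of $a$, which exists by the no-isolated-vertex hypothesis. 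According to which of $bd,cd$ lie in $E(G)$, the induced subgraph on $\{a,b,c,d\}$ is $2K_2$, $P_4$ (two symmetric subcases), or the paw; their second eigenvalues are $1$, $(\sqrt{5}-1)/2$, and the unique root of $\lambda^{3}-\lambda^{2}-3\lambda+1$ lying in $(0,1)$ respectively, all strictly positive. Cauchy interlacing then gives $\lambda_2(G)\ge\lambda_2(G[\{a,b,c,d\}])>0$, contradicting the hypothesis.

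Given the lemma, the remaining deductions are immediate. If $\lambda_2(G)=-1$, then $\lambda_2(G)\le 0$ makes $G$ complete multipartite; the ``if'' part of (2) rules out $k\le n-1$ (which would force $\lambda_2=0$), so $G\cong K_n$, proving (1). If $\lambda_2(G)=0$, the lemma makes $G$ complete multipartite with $G\not\cong K_n$, yielding $2\le k\le n-1$ and proving (2). Finally, any graph with $-1<\lambda_2(G)<0$ would be complete multipartite by the lemma, forcing $\lambda_2\in\{-1,0\}$ -- a contradiction; this is (3), which I read as asserting strict inequalities (the endpoints being realized by $K_n$ and by the complete multipartite graphs of (2)). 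The only non-routine computation I anticipate is the paw subcase, where one must verify that $\lambda^{3}-\lambda^{2}-3\lambda+1$ has a root in $(0,1)$; this is a short sign-change check using the values $+1$ at $\lambda=0$ and $-2$ at $\lambda=1$.
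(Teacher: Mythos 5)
The paper does not prove this theorem at all --- it is imported verbatim as Theorem 1 of \cite{CY} --- so there is no in-paper argument to measure your proposal against; judged on its own, your proof is correct and complete in outline. The spectral computation for $K_{n_1,\dots,n_k}$ (the $n-k$ kernel vectors supported on single parts, plus the symmetrized quotient matrix $\sqrt{n}\,\sqrt{n}^{\,T}-\mathrm{diag}(n_1,\dots,n_k)$, a rank-one PSD perturbation of a negative definite matrix) correctly yields one positive, $n-k$ zero and $k-1$ negative eigenvalues, and your key lemma --- no isolated vertices and $\lambda_2(G)\le 0$ force $G$ to be $(K_1+K_2)$-free, hence complete multipartite --- is exactly the right structural statement. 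The four-vertex case analysis is exhaustive (the only undetermined adjacencies among $\{a,b,c,d\}$ are $bd$ and $cd$), the listed second eigenvalues of $2K_2$, $P_4$ and the paw are correct (the paw is the graph $(K_1+K_2)\nabla K_1$ whose spectrum the paper itself records as $\{2.17009, 0.31111, -1, -1.48119\}$, consistent with your factorization $(\lambda+1)(\lambda^3-\lambda^2-3\lambda+1)$), and Theorem \ref{interlacing} with $i=2$, $m=4$ gives $\lambda_2(G)\ge\lambda_2(G[\{a,b,c,d\}])>0$ as claimed; this is essentially the argument of Cao and Yuan. Two minor remarks. First, you are right to read part (3) with strict inequalities: as printed, ``$-1\le\lambda_2(G)\le 0$'' is contradicted by parts (1) and (2) themselves, and the source states $-1<\lambda_2(G)<0$; it is worth saying explicitly that this is a typographical slip in the quoted statement. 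Second, the phrase ``reduce the remaining spectrum to the $k\times k$ quotient matrix'' deserves one justifying sentence --- the vectors constant on each part span a $k$-dimensional $A$-invariant complement of the kernel piece you constructed, and $A$ acts on it as $B$ --- after which the dimension count closes and the deductions of (1), (2) and (3) from the lemma go through exactly as you describe.
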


\begin{thm}[See \cite{S}, and also Theorem 6.7 of \cite{CDS}] \label{Sm} A graph has exactly one positive eigenvalue if and only if its non-isolated vertices form a complete multipartite graph.
\end{thm}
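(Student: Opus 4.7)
The plan is to first reduce to graphs with no isolated vertices, since an isolated vertex contributes exactly one zero eigenvalue and so affects neither the positive-eigenvalue count nor the induced subgraph on non-isolated vertices. It then suffices to prove the equivalence: a graph $H$ of order $n$ with no isolated vertices has exactly one positive eigenvalue if and only if $H$ is a complete multipartite graph $K_{n_1,\ldots,n_k}$ with $k\geq 2$ parts.

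For the $(\Leftarrow)$ direction, I would write $A=A(K_{n_1,\ldots,n_k})$ as $A=J-B$, where $J$ is the $n\times n$ all-ones matrix (rank one, positive semidefinite, with eigenvalues $n,0,\ldots,0$) and $B=\bigoplus_{i=1}^{k} J_{n_i}$ is the block-diagonal positive semidefinite matrix whose blocks are all-ones. Apply Weyl's inequality to $A=J+(-B)$ at index $i=j=2$: since $B\succeq 0$ we have $\lambda_1(-B)=0$, and $\lambda_2(J)=0$, so $\lambda_2(A)\leq 0$. On the other hand, because $k\geq 2$ the graph $H$ has edges, so $\mathrm{tr}(A)=0$ and $\mathrm{tr}(A^2)=2|E(H)|>0$ together force $\lambda_1(A)>0$. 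Hence $A$ has exactly one positive eigenvalue.

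For the $(\Rightarrow)$ direction, assume $H$ has no isolated vertices and exactly one positive eigenvalue. First, $H$ must be connected: otherwise write $H=H_1\sqcup H_2$ where both $H_1,H_2$ contain edges (since $H$ has no isolated vertices), and then $\lambda_1(H_1),\lambda_1(H_2)>0$ both appear in $\mathrm{Spec}(H)$, contradicting the hypothesis. Next, since $H$ has an edge, applying Theorem \ref{interlacing} to the induced $K_2$ on that edge gives $\lambda_2(H)\geq \lambda_2(K_2)=-1$, while the hypothesis gives $\lambda_2(H)\leq 0$. So $\lambda_2(H)\in[-1,0]$, and part (3) of Theorem \ref{eigenvalue} rules out the open interval $(-1,0)$, leaving $\lambda_2(H)\in\{-1,0\}$. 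If $\lambda_2(H)=-1$, part (1) of Theorem \ref{eigenvalue} gives $H\cong K_n=K_{1,1,\ldots,1}$; if $\lambda_2(H)=0$, part (2) gives $H$ complete $k$-partite for some $2\leq k\leq n-1$. Either way $H$ is complete multipartite.

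The main obstacle is the necessity direction, and specifically excluding the range $\lambda_2(H)\in(-1,0)$; this is precisely what Theorem \ref{eigenvalue} supplies, and it encapsulates a nontrivial spectral classification due to Cao and Hong. The other steps — the reduction to the case of no isolated vertices, the Weyl bound for sufficiency, the connectedness observation, and the interlacing lower bound $\lambda_2(H)\geq -1$ — are all short and largely mechanical once Theorems \ref{interlacing} and \ref{eigenvalue} are in hand.
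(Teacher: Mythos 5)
The paper offers no proof of this statement at all: it is imported as a known theorem of Smith \cite{S} (see also Theorem 6.7 of \cite{CDS}) and used as a black box, so there is no internal argument to compare yours against. Taken on its own terms, your derivation is essentially correct. The reduction to graphs without isolated vertices, the decomposition $A(K_{n_1,\ldots,n_k})=J-\bigoplus_{i}J_{n_i}$ with Weyl's inequality for sufficiency, and the trace argument for $\lambda_1>0$ all check out; the one slip is the Weyl index, since to get $\lambda_2(A)\le\lambda_2(J)+\lambda_1(-B)$ you need $(i,j)=(2,1)$, not $i=j=2$ (which only bounds $\lambda_3$), though the inequality you actually invoke is the right one. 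Two caveats. First, your necessity direction leans entirely on the Cao--Yuan classification (Theorem \ref{eigenvalue}), whose part (2) is essentially equivalent to the statement being proved for graphs without isolated vertices; this is logically admissible here, since both results are external citations in this paper, but it is a derivation from a stronger imported theorem rather than a self-contained proof. Smith's original route is more elementary: if the non-isolated vertices do not form a complete multipartite graph, non-adjacency fails to be transitive there, and a short case analysis produces an induced four-vertex subgraph ($2K_2$, $P_4$, or the paw $(K_1+K_2)\nabla K_1$) with two positive eigenvalues, whence Theorem \ref{interlacing} forces $\lambda_2>0$. Second, part (3) of Theorem \ref{eigenvalue} as printed literally asserts that no graph has $-1\le\lambda_2(G)\le 0$, which would contradict parts (1)--(2); you are right to read it as the open interval $(-1,0)$, but since excluding exactly that interval is the crux of your necessity argument, you should state that reading explicitly.
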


\begin{thm}[Theorem 2 of \cite{CY}] \label{1/3} Let $G$ be a graph of order $n$ without isolated vertices. Then $0<\lambda_2(G)<\frac{1}{3}$
if and only if $G\cong (K_1+K_2)\nabla\overline{K_{n-3}}$, where
$\lambda_2(G)$ is the second largest eigenvalue of $G$.
\end{thm}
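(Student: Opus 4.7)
The plan is to prove the two directions separately, with the forward (\emph{if}) direction being an explicit spectral computation and the reverse direction reducing to a structural classification.

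For the \emph{if} direction, let $G_n = (K_1+K_2)\nabla\overline{K_{n-3}}$ with $n \geq 4$. Denote by $u$ the isolated vertex of $K_1+K_2$, by $\{v,w\}$ the two vertices of the $K_2$, and by $x_1,\ldots,x_{n-3}$ the vertices of $\overline{K_{n-3}}$. The partition $\{\{u\},\{v,w\},\{x_1,\ldots,x_{n-3}\}\}$ is equitable. The pair $\{v,w\}$ is a true-twin class, contributing the eigenvalue $-1$; the class $\{x_1,\ldots,x_{n-3}\}$ consists of false twins, contributing eigenvalue $0$ with multiplicity $n-4$. The remaining three eigenvalues are those of the $3\times 3$ quotient
$$Q=\begin{pmatrix} 0 & 0 & n-3 \\ 0 & 1 & n-3 \\ 1 & 2 & 0 \end{pmatrix},$$
with characteristic polynomial $p(\lambda)=\lambda^3-\lambda^2-(3n-9)\lambda+(n-3)$. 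Since $p(0)=n-3>0$, $p(1/3)=-2/27<0$, and $p(-1)=4n-14>0$, the three quotient roots lie respectively in $(1/3,+\infty)$, $(0,1/3)$, and $(-\infty,-1)$. Merging with the twin contributions yields that the middle quotient root is precisely $\lambda_2(G_n)\in(0,1/3)$.

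For the \emph{only if} direction, let $G$ be of order $n$ with no isolated vertices and $0<\lambda_2(G)<1/3$. By Theorem~\ref{interlacing}, $\lambda_2(H)<1/3$ for every induced subgraph $H$ of $G$. Since $\lambda_2(2K_2)=1$ and $\lambda_2(P_4)=(\sqrt{5}-1)/2>1/3$, the graph $G$ contains neither $2K_2$ nor $P_4$ as an induced subgraph. The first forbiddance forces $G$ to be connected (otherwise two edge-bearing components, which must exist since $G$ has no isolated vertices, induce a $2K_2$), and the second makes $G$ a cograph, so $G=G_1\nabla G_2$ for some nonempty cographs $G_1,G_2$. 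Moreover, $\lambda_2(G)>0$ combined with Theorem~\ref{Sm} and Theorem~\ref{eigenvalue}(2) rules out $G$ being complete multipartite.

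The remaining, and hardest, step is a structural classification of connected $\{P_4,2K_2\}$-free graphs with $\lambda_2\in(0,1/3)$. I would proceed inductively on the cograph decomposition via the following key observation: whenever a cograph factor $G_i$ is a nontrivial disjoint union $H+H'$ with both $H,H'$ containing an edge, choosing one edge from each yields an induced $2K_2$ in $G$, which is forbidden. Hence each union-node in the cotree of $G$ contributes at most one edge-bearing component, the rest being isolated vertices. This reduces the candidates to a short list of small prototypes joined with independent sets; for each, twin equivalences collapse the spectrum into a quotient matrix of size at most four together with known $\{0,-1\}$ contributions, and direct cubic or quartic analysis compares $\lambda_2$ with $1/3$. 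In every case except $G\cong(K_1+K_2)\nabla\overline{K_{n-3}}$ one obtains $\lambda_2=0$ (for instance $P_3\nabla\overline{K_m}=K_{1,2,m}$, which is complete multipartite) or $\lambda_2\geq 1/3$ (for instance $(K_1+K_2)\nabla K_2$, which yields $\lambda_2>1/3$ by a direct cubic check). The main obstacle is the sharpness of the bound: since $p(1/3)=-2/27$ is independent of $n$, the middle root of $p$ approaches $1/3$ as $n\to\infty$, so the close competitors must be ruled out by exact eigenvalue computations rather than crude estimates.
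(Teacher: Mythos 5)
The paper offers no proof of this statement to compare against: it is imported verbatim as Theorem 2 of \cite{CY}, and the authors use it as a black box. Judged on its own merits, your \emph{if} direction is complete and correct. The partition $\{\{u\},\{v,w\},\{x_1,\ldots,x_{n-3}\}\}$ is indeed equitable, the quotient matrix and its characteristic polynomial $p(\lambda)=\lambda^3-\lambda^2-(3n-9)\lambda+(n-3)$ are computed correctly, and the sign pattern $p(0)=n-3>0$, $p(1/3)=-2/27<0$, $p(-1)=4n-14>0$ together with the twin eigenvalues $-1$ and $0^{\,n-4}$ does place $\lambda_2$ in $(0,1/3)$.

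The \emph{only if} direction, however, has a genuine gap. Your reduction is sound: interlacing with $\lambda_2(2K_2)=1$ and $\lambda_2(P_4)=(\sqrt{5}-1)/2$ forbids induced $2K_2$ and $P_4$, so $G$ is a connected cograph, and Theorem \ref{eigenvalue} excludes complete multipartite graphs. But the class of connected, non-complete-multipartite, $2K_2$-free cographs is not ``a short list of small prototypes'': even after your observation that each union-node carries at most one edge-bearing component, it still contains infinite families with unbounded parameters, such as $(K_1+K_{r,s})\nabla\overline{K_q}$, $(K_1+K_2)\nabla K_{n_1,\ldots,n_m}$, $(\nabla_t(K_1+K_2))\nabla\overline{K_q}$, and more generally $(sK_1+H)\nabla M$ for connected $P_4$-free $H$. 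Eliminating all of these except $(K_1+K_2)\nabla\overline{K_{n-3}}$ is precisely the content of Cao and Yuan's theorem, and it is delicate: for example $(K_1+K_{1,2})\nabla K_1$ is a legitimate competitor whose quotient polynomial $\lambda^4-6\lambda^2-4\lambda+2$ equals $1/81$ at $\lambda=1/3$, giving $\lambda_2\approx 0.335$ --- it exceeds $1/3$ only in the third decimal place, so no crude estimate can dispose of it. A complete argument must identify the minimal forbidden induced subgraphs with $\lambda_2\geq 1/3$ (such as $(K_1+K_2)\nabla K_2$, $(K_1+K_3)\nabla K_1$, and $(K_1+K_{1,2})\nabla K_1$), verify each by an exact computation, and then show that every graph in the residual families contains one of them. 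You correctly locate this as the hardest step, but you do not carry it out, so the backward implication remains unproved.
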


\begin{prop}[Proposition 4.1 of \cite{ao}]\label{kmn}
Let $m$ and $n$ be positive integers. Then $\text{cs}(K_{m,n})>0$ if and only if  the minimum of $x+y$ for all positive integers  $x,y$ such that $xy=mn$
is  attained on $\{x,y\}=\{m,n\}$.
\end{prop}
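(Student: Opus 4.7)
The plan is to classify all graphs cospectral with $K_{m,n}$ and then translate the existence of a non-isomorphic cospectral mate into the stated arithmetic condition. Note that $\text{Spec}(K_{m,n})=\{\sqrt{mn},0^{m+n-2},-\sqrt{mn}\}$, so $\text{cs}(K_{m,n})>0$ exactly when no graph on $m+n$ vertices other than $K_{m,n}$ shares this spectrum.

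First I would show that any graph $H$ with that spectrum has the form $K_{a,b}+tK_1$ with $ab=mn$ and $a+b+t=m+n$. Since $H$ has exactly one positive eigenvalue, Theorem \ref{Sm} forces the non-isolated vertices of $H$ to induce a complete multipartite graph $K_{n_1,\dots,n_k}$. A standard computation, using for instance the $k\times k$ quotient matrix of the equitable partition into parts, or interlacing (Theorem \ref{interlacing}) applied to the $K_k$ obtained by picking one vertex from each part, shows that $K_{n_1,\dots,n_k}$ contributes exactly $k-1$ negative eigenvalues. Since $H$ has only one, $k=2$; so the non-isolated part of $H$ is a complete bipartite graph $K_{a,b}$, matching the top eigenvalue gives $ab=mn$, and the leftover $t=m+n-a-b\geq 0$ vertices are isolated.

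Conversely every $K_{a,b}+tK_1$ with $ab=mn$ and $a+b+t=m+n$ is cospectral with $K_{m,n}$, and is isomorphic to $K_{m,n}$ iff $t=0$ and $\{a,b\}=\{m,n\}$. Hence $K_{m,n}$ admits a non-isomorphic cospectral mate iff some factorization $ab=mn$ with $\{a,b\}\neq\{m,n\}$ satisfies $a+b\leq m+n$. Since $a+b=m+n$ together with $ab=mn$ already forces $\{a,b\}=\{m,n\}$ by Vieta's formulas (both pairs are the roots of $t^2-(m+n)t+mn$), the inequality is actually strict for any ``bad'' factorization. Negating, $\text{cs}(K_{m,n})>0$ iff $m+n$ is the minimum of $x+y$ over all positive integer factorizations $xy=mn$, attained at $\{x,y\}=\{m,n\}$, which is the proposition.

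The main obstacle is the classification step: showing that ``exactly one positive and one negative eigenvalue, rest zero'' forces the non-isolated part of $H$ to be complete bipartite. Theorem \ref{Sm} dispatches the positive-eigenvalue half at once; the negative-eigenvalue half still requires the inertia count for a general complete multipartite graph, which is where I would spend most of the effort. Once that count is in hand, the remainder is a clean combinatorial translation.
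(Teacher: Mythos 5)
Your argument is correct; note, though, that the paper itself gives no proof of this statement --- it is quoted verbatim as Proposition 4.1 of the cited reference [ao], and your reconstruction is essentially the standard argument used there: Smith's theorem (Theorem \ref{Sm}) plus the inertia of complete multipartite graphs forces any cospectral mate to be $K_{a,b}+tK_1$ with $ab=mn$, and Vieta's formulas finish the combinatorial translation. The one step you defer --- that $K_{n_1,\dots,n_k}$ has at least $k-1$ negative eigenvalues --- is all you actually need (it forces $k\le 2$), and it follows at once from interlacing with the induced $K_k$, exactly as you indicate.
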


By Proposition \ref{kmn}, if $n>0$, then $\text{cs}(K_{n,n})>0$. Now we compute  $\text{cs}(K_{n,n})$.\\ 
The method that we use below for the proof is similar to that of Theorem 1.4 of \cite{ao}.\\

\noindent{\bf Proof of Theorem \ref{knnn}.}
Since for all positive integers $p$ and $q$ 
$$ Spec(K_{p,q})=\{\sqrt{pq},\underbrace{0,\ldots,0}_{p+q-2},-\sqrt{pq}\},$$  
it implies that $\sigma(K_{n,n},K_{n-1,n+1})=2(n-\sqrt{n^2-1})$.
By direct computing, the result follows for $n=2$ and $n=3$. Now we can assume that $n\geq 4$. Suppose, for a contradiction, that there exists a graph $H$ of order $2n$ such that $H$ is not isomorphic to either $K_{n,n}$ or  $K_{n-1,n+1}$, and also $\sigma(K_{n,n},H)\leq 2(n-\sqrt{n^2-1})$. Let $\lambda_1\geq\cdots\geq\lambda_{2n}$ be the eigenvalues of $H$.
 Since for $n\geq 4$, $2(n-\sqrt{n^2-1})<\frac{1}{3}$, it follows that $|\lambda_2|<\frac{1}{3}$.
By Theorem \ref{eigenvalue}, we have $0\leq\lambda_2<\frac{1}{3}$. Now it remains to investigate the following cases:\\

\noindent {\bf Case 1.} \; Assume that $\lambda_2=0$. If $\lambda_1=0$, then $H\cong \overline{K_{2n}}$ and so
$\sigma(K_{n,n},H)=2n\geq8$, a contradiction. Hence we can suppose that $\lambda_1>0$. By Theorem \ref{Sm},
there exist some positive integers $k$ and $n_1,\dots,n_k$ and an
integer $t\geq0$ such that $H\cong \overline{K_t}+K_{n_1,\ldots,n_k}$. If
$k=1$, then $H\cong \overline{K_t}+K_{2n-t}$ such that $1\leq t\leq 2n-2$. We have
$$\sigma(K_{n,n},H)=\mid n-t-1\mid+ 3n-t-3.$$
One can to see that $\sigma(K_{n,n},H)>2(n-\sqrt{n^2-1})$, a contradiction. If $k=2$, then
$H\cong \overline{K_t}+K_{p,q}$ for some $p$ and $q$ such that $p+q=2n-t$.
In this case we have $\sigma(K_{n,n},H)=2\mid n-\sqrt{pq}\mid$. It is
not difficult to see that if $\{p,q\}\not=\{n,n\}$ and
$\{p,q\}\not=\{n-1,n+1\}$, then
$2\mid n-\sqrt{pq}\mid >2(n-\sqrt{n^2-1})$. Therefore $k\geq 3$. If
$n_1=\cdots=n_k=1$, then $H\cong\overline{K_t} + K_{2n-t}$ and so
$\sigma(K_{n,n},H)> 2(n-\sqrt{n^2-1})$, a contradiction. Now we
may assume that $n_i\geq 2$, for some $1\leq i \leq k$. So $H$ has $K_{1,1,2}$ as an induced subgraph. Since $Spec(K_{1,1,2})=\{2.56155, 0, -1, -1.56155\}$ and
$\lambda_3(K_{1,1,2})=-1$, by 
Theorem \ref{interlacing}, we have $\mid\lambda_{2n-1}\mid\geq 1$ and so
$\sigma(K_{n,n},H)\geq 1$, a contradiction. \\

\noindent {\bf Case 2.} \; Suppose that $0<\lambda_2<\frac{1}{3}$. By Theorem \ref{1/3}, there exists an integer $t\geq0$ such that $H\cong
\overline{K_t}+(K_1+K_2)\nabla\overline{K_{2n-t-3}}$. Let $2n-t-3=1$. Since $Spec((K_1+K_2)\nabla K_1)=\{2.17009, .31111, -1, -1.48119\}$, it is not hard to see that $\sigma(K_{n,n},H)>2(n-\sqrt{n^2-1})$, a
contradiction. So we can assume that $2n-t-3>1$. Then $H$ has $K_{1,1,2}$ as an induced subgraph and the rest is similar to the previous part.\\
This completes the proof. $\hfill\Box$ \\

We need the following results to proof Theorem \ref{knn+1}.

\begin{thm}[See {\cite{p}}]\label{petro}  
Let $G$ be a graph without isolated vertices and let $\lambda_2(G)$ be the second largest eigenvalue of $G$. Then $0<\lambda_2(G)\leq\sqrt{2}-1$ if and only if one of the following holds:
\begin{enumerate}
\item $G\cong (\nabla_{t}(K_1+K_2))\nabla K_{n_1,\ldots,n_m},$
\item $G\cong (K_1+K_{r,s})\nabla\overline{K_q},$
\item $G\cong (K_1+K_{r,s})\nabla K_{p,q}.$
\end{enumerate}
\end{thm}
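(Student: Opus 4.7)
The plan is to establish the biconditional by treating the two directions separately. The ``if'' direction amounts to a direct spectral verification for each of the three listed families, while the ``only if'' direction requires a structural classification that leans on Theorem \ref{interlacing} together with the partial classifications already recorded in Theorems \ref{eigenvalue}, \ref{Sm}, and \ref{1/3}.

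For the ``if'' direction, I would exploit the fact that each candidate graph is built as a join $A \nabla B$. For such a join the characteristic polynomial factors through the spectra of $A$ and $B$ together with a small ``quotient'' piece coming from the equitable partition induced by the join. Concretely, $\nabla_t(K_1+K_2)$ can be analyzed by induction on $t$ tracking how the adjacency matrix acts on the two classes ``$K_1$-type vertex'' and ``$K_2$-type vertex'' in each copy; likewise $K_1+K_{r,s}$ appearing in families (2) and (3) has a transparent three-class equitable partition whose join with a further piece adds only a controlled number of new eigenvalues. In each case the bound $\lambda_2 \leq \sqrt{2}-1$ emerges as the larger root of an explicit low-degree polynomial coming from a quotient matrix, with equality attained on a single small member of the family.

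For the ``only if'' direction, my plan is a forbidden-induced-subgraph analysis combined with Theorem \ref{interlacing}. Since every induced subgraph $H$ of $G$ satisfies $\lambda_2(H) \leq \lambda_2(G) \leq \sqrt{2}-1$, the first step is to compile a finite list of small graphs $H$ minimal with $\lambda_2(H) > \sqrt{2}-1$ and declare them forbidden in $G$. By Theorem \ref{1/3}, the subwindow $0 < \lambda_2 < \tfrac{1}{3}$ already pins $G$ down to the special shape $(K_1+K_2) \nabla \overline{K_{n-3}}$, which fits into family (1) with $t=1$ and a single multipartite part. So it suffices to classify $G$ with $\tfrac{1}{3} \leq \lambda_2(G) \leq \sqrt{2}-1$. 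On this range I would use Theorem \ref{Sm} to peel off a complete-multipartite piece whenever $G$ has exactly one positive eigenvalue, and use Theorem \ref{eigenvalue}, together with the forbidden list, to show that what remains after peeling must be either a $\nabla_t(K_1+K_2)$ block or a $K_1+K_{r,s}$ block glued to the peeled-off complete multipartite graph via a join.

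The main obstacle, I expect, is the closing step of the ``only if'' direction: converting the spectral bound $\lambda_2 \leq \sqrt{2}-1$ into the combinatorial statement that certain vertices must be universal across a prescribed bipartition, i.e.\ that $G$ is genuinely a join of the predicted form. The cleanest route is probably an induction on $|V(G)|$: fix a vertex $v$ of maximum degree, examine $N[v]$ versus $V(G)\setminus N[v]$, and use the forbidden-subgraph list together with interlacing to force every non-edge across this cut to be impossible, thereby producing the join structure. Handling the boundary case $\lambda_2 = \sqrt{2}-1$ carefully, and verifying that no hidden configurations escape the three listed families (in particular that no ``twisted'' gluing of $K_1+K_2$ and $K_1+K_{r,s}$ blocks can slip through), is where the bulk of the casework will lie.
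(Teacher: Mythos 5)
This statement is not proved in the paper at all: it is imported verbatim from Petrovi\'c \cite{p} and used as a black box, so there is no in-paper argument to compare your proposal against. Judged on its own terms, what you have written is a strategy outline rather than a proof, and its hardest steps are precisely the ones left unexecuted. In the ``if'' direction, the quotient matrix of the equitable partition of $(\nabla_t(K_1+K_2))\nabla K_{n_1,\ldots,n_m}$ has $2t+m$ classes, so $\lambda_2$ is \emph{not} the root of ``an explicit low-degree polynomial''; you would need a uniform argument (valid for all $t$, $m$, $n_i$, $r$, $s$, $p$, $q$) that the second eigenvalue of this growing quotient stays below $\sqrt{2}-1$, and nothing in the sketch supplies it. (It is true, and worth recording, that the non-quotient eigenvalues contributed by the pieces are all $\leq 0$, so the problem genuinely reduces to the quotient; but the reduction alone does not finish the verification.)

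In the ``only if'' direction the gaps are more serious. The finite list of minimal induced subgraphs with $\lambda_2>\sqrt{2}-1$ is never compiled, and its finiteness is asserted rather than established --- for a hereditary spectral property this requires proof, and identifying that list is a substantial part of Petrovi\'c's paper. Theorem \ref{1/3} only disposes of the window $0<\lambda_2<\tfrac13$, so the entire interval $\tfrac13\leq\lambda_2\leq\sqrt{2}-1$, where all three families genuinely live, remains to be classified; Theorems \ref{eigenvalue} and \ref{Sm} say nothing about graphs with two or more positive eigenvalues, which do occur here (e.g.\ $(K_1+K_2)\nabla K_1$ has $\lambda_2\approx 0.311>0$). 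Finally, you explicitly defer the step that converts the eigenvalue bound into the join structure (``where the bulk of the casework will lie''), which is the theorem. As it stands the proposal identifies the right tools (interlacing, forbidden induced subgraphs, equitable partitions) but does not constitute a proof; for the purposes of this paper the honest course is to cite \cite{p}, as the authors do.
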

Now we prove an ``$\ell^1$-version'' of  Lemma 2.7 of \cite{o}.
\begin{lem}\label{knm}
Let $m$ and $n$ be two positive integers and $G$ be a graph of order $n + m$. Suppose that there are no positive integers $r, s$ and a non-negative integer $t$ such that
$G\cong K_{r,s}+tK_1$. If $\lambda_2(G)\leq \sqrt{2}-1$, then $\sigma(G,K_{m,n})\geq 1$.
\end{lem}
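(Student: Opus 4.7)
Since $Spec(K_{m,n}) = \{\sqrt{mn}, 0^{N-2}, -\sqrt{mn}\}$ with $N = m+n$, the distance is
$$\sigma(G, K_{m,n}) = |\lambda_1(G) - \sqrt{mn}| + \sum_{i=2}^{N-1}|\lambda_i(G)| + |\lambda_N(G) + \sqrt{mn}|.$$
I would decompose $G = G' + tK_1$ where $G'$ is the subgraph on the non-isolated vertices of $G$. If $G' = \emptyset$ then $G = NK_1$ and $\sigma(G,K_{m,n}) = 2\sqrt{mn} \geq 2$, so assume $G'$ is nonempty. Because $\lambda_2(G) = \max(\lambda_2(G'), 0)$ when $t \geq 1$, and $\lambda_2(G) = \lambda_2(G')$ when $t = 0$, the hypothesis $\lambda_2(G) \leq \sqrt{2}-1$ yields $\lambda_2(G') \leq \sqrt{2}-1$ in both cases.

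The central idea is that for most such $G$, the graph contains $K_{1,1,2}$ as an induced subgraph. Since $Spec(K_{1,1,2}) = \{\frac{1+\sqrt{17}}{2}, 0, -1, \frac{1-\sqrt{17}}{2}\}$, Theorem \ref{interlacing} applied with $i = 3$ yields $\lambda_{N-1}(G) \leq \lambda_3(K_{1,1,2}) = -1$, so $\sigma(G, K_{m,n}) \geq |\lambda_{N-1}(G)| \geq 1$.

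By Theorem \ref{eigenvalue} and Theorem \ref{petro}, the graph $G'$ (which has no isolated vertex and $\lambda_2(G') \leq \sqrt{2}-1$) satisfies one of: (i) $G' \cong K_{|V(G')|}$; (ii) $G'$ is complete multipartite with at least two parts; or (iii) $G'$ is one of the three joins in Theorem \ref{petro}. Case (ii) with exactly two parts means $G = K_{r,s} + tK_1$, excluded by hypothesis. In case (ii) with at least three parts, some part has size at least $2$ (otherwise $G'$ would be a complete graph, forcing $\lambda_2(G') = -1 \neq 0$), and two vertices from that part together with one vertex from each of two other parts induce $K_{1,1,2}$. For case (iii), a direct inspection of each of the three forms in Theorem \ref{petro} exhibits an induced $K_{1,1,2}$, except for the single small exception $G' \cong (K_1+K_2)\nabla K_1$, which has only four vertices.

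Two exceptional families then remain to be handled by direct computation. First, if $G \cong K_{N-t} + tK_1$ with $N - t \geq 3$ (the case $N - t = 2$ being excluded since $K_2 = K_{1,1}$), the spectrum $\{N-t-1,\, 0^t,\, (-1)^{N-t-1}\}$ gives $\sum_{i=2}^{N-1}|\lambda_i(G)| \geq N - t - 2 \geq 1$, hence $\sigma(G,K_{m,n}) \geq 1$. Second, if $G \cong (K_1+K_2)\nabla K_1 + t'K_1$, then $Spec(G)$ is approximately $\{2.17009,\, 0.31111,\, 0^{t'},\, -1,\, -1.48119\}$, so $|\lambda_2(G)| + |\lambda_{N-1}(G)| \geq 0.311 + 1 > 1$. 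The main technical obstacle lies in the case analysis within Theorem \ref{petro}: I must carefully verify that among its three admissible structures, only the four-vertex graph $(K_1+K_2)\nabla K_1$ fails to contain an induced $K_{1,1,2}$, by exhibiting explicit four-vertex subsets that form $K_4 - e$ in the remaining parameter regimes.
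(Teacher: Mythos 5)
Your proposal is correct and follows essentially the same route as the paper: strip isolated vertices, classify $G$ via Theorems \ref{eigenvalue}, \ref{Sm} and \ref{petro}, locate a $4$-vertex induced subgraph with third eigenvalue $-1$ and apply interlacing (Theorem \ref{interlacing}), handling $K_{N-t}+tK_1$ and the small sporadic graph by direct computation. The only (cosmetic) difference is that for the Petrovi\'c forms you hunt for an induced $K_{1,1,2}$ and must treat $(K_1+K_2)\nabla K_1$ as an extra exception, whereas the paper uses $(K_1+K_2)\nabla K_1$ itself (which also has $\lambda_3=-1$ and sits inside all three forms) as the forbidden subgraph, avoiding that exception.
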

\begin{proof}
By Theorem \ref{eigenvalue}, we can consider the following cases:\\

\noindent {\bf Case 1.} \; $\lambda_2(G)=-1$. Thus $G\cong K_{n+m}$. It is not hard to see that $\sigma(G,K_{m,n})\geq 1$.\\

\noindent {\bf Case 2.} \; $\lambda_2(G)=0$. If $\lambda_1(G)=0$, then $G\cong \overline{K_{m+n}}$. Therefore 
 $\sigma(G,K_{m,n})=2\sqrt{mn}\geq 2$. Now we suppose that $\lambda_1(G) > 0$. By Theorem \ref{Sm}, there are some positive integers $k, n_1,\ldots, n_k$ and a non-negative integer t such that $G\cong K_{n_1,\ldots,n_k}+tK_1$. If $k=1$, then $G\cong K_{n+m-t}+tK_1$. We have 
$$\sigma(G,K_{m,n})=|m+n-t-1-\sqrt{mn}|+(m+n-t-2)+|1-\sqrt{mn}|,$$
Since $\lambda_2(G)=0$ and $\lambda_1(G)> 0$, $1\leq t\leq m+n-2$ and $m,n\geq 2$. So $\sigma(G,K_{m,n})\geq 1$. If $k=2$, then $G\cong K_{n_1,n_2}+tK_1$, a contradiction. Let $k\geq 3$. If $n_1=\cdots=n_k=1$, then $G\cong K_{n+m-t}+tK_1$. Therefore $\sigma(G,K_{m,n})\geq 1$. Now we can assume that $n_i\geq 2$, for some $1\leq i \leq k$. So $G$ has $K_{1,1,2}$ as an induced subgraph. Since $\lambda_3(K_{1,1,2})=-1$, by Theorem \ref{interlacing}, we have $\mid\lambda_{n+m-1}(G)\mid\geq 1$ and so $\sigma(G,K_{m,n})\geq 1$. \\

\noindent {\bf Case 3.} \; $0<\lambda_2(G)\leq \sqrt{2}-1$. By Theorem \ref{petro}, $G$ is isomorphic to one the following graphs: $(\nabla_h(K_1+K_2))\nabla K_{n_1,\ldots,n_k}+tK_1$, $(K_1+K_{r,s})\nabla\overline{K_q}+tK_1$ or $(K_1+K_{r,s})\nabla K_{p,q}+tK_1$, where $k, r, s, p, q$ and $n_1,\dots,n_k$ are some positive integers
and $t, h$ are two non-negative integers. Let $G\cong (\nabla_h(K_1+K_2))\nabla K_{n_1,\ldots,n_k}$. If $h=0$, then $G\cong K_{n_1,\ldots,n_k}$, that is considered in the previous part. If $h\geq 1$, then $(K_1+K_2)\nabla K_1$ is an induced sungraph of $G$. Since $\lambda_3((K_1+K_2)\nabla K_1)=-1$, by Theorem \ref{interlacing}, $\mid\lambda_{n+m-1}\mid\geq 1$ and so $\sigma(G,K_{m,n})\geq 1$. Also $(K_1+K_2)\nabla K_1$ is an induced subgraph for other cases and the result follows by Theorem \ref{interlacing}.
\end{proof}
\noindent{\bf Proof of Theorem \ref{knn+1}.}
We have $\sigma(K_{n,n+1},K_{n-1,n+2})=2(\sqrt{n^2+n}-\sqrt{n^2+n-2})$. 
Suppose, for a contradiction, that $H$ is a graph not isomorphic to both of $K_{n,n+1}$, $K_{n-1,n+2}$ and $\text{cs}(K_{n,n+1})=\sigma(K_{n,n+1},H)$. By direct computing of cospectralities of all graphs of orders at most $10$, one finds that the order of $H$ is at least $11$. We may assume that $n=5$. Note that $\sigma(K_{5,6},K_{4,7})=2(\sqrt{30}-\sqrt{28})$. 
If there are positive integers $r$, $s$ and a non-negative integer $t$ such that $H\cong K_{r,s}+tK_1$ and $r+s+t = 11$, then $\sigma(K_{5,6},H)=2|\sqrt{30}-\sqrt{rs}|> 2(\sqrt{30}-\sqrt{28})$.  So we can assume that there are no positive integers $r$, $s$ and a non-negative integer $t$ such that $H\cong K_{r,s}+tK_1$ and $r+s+t= 11$. Since $\sigma(K_{5,6},H)\leq 2(\sqrt{30}-\sqrt{28})<\sqrt{2}-1$, $|\lambda_2(H)|<\sqrt{2}-1$. By Lemma \ref{kmn}, $\sigma(K_{5,6},H)\geq 1$, a contradiction. We conclude that the result holds for $n=5$. Now we may assume that $n\geq 6$ and by similar arguments given in Theorem \ref{knnn}, the result follows. $\hfill\Box$ 

\section*{{\bf Acknowledgments}}

The research of the first author was in part supported by a grant from School of Mathematics, Institute for Research in Fundamental Sciences(IPM).


\end{document}